\newcommand{\sel}{\mathrm{Sel}^{(2)}}
\newcommand{\selfake}{\mathrm{Sel}_\mathrm{fake}^{(2)}}
\newcommand{\uu}{\underline{u}}
\newcommand{\FF}{\mathbb{F}}
\newcommand{\PP}{\mathbb{P}}
\newcommand{\QQ}{\mathbb{Q}}
\newcommand{\RR}{\mathbb{R}}
\newcommand{\ZZ}{\mathbb{Z}}
\newcommand{\Aut}{\mathrm{Aut}}
\newcommand{\Gal}{\mathrm{Gal}}
\newcommand{\Cov}{\mathrm{Cov}}
\newcommand{\Jac}{\mathrm{Jac}}
\newcommand{\ord}{\mathrm{ord}}
\newcommand{\disc}{\mathrm{disc}}
\newcommand{\genus}{\mathrm{genus}}
\newcommand{\rank}{\mathrm{rank}}
\newcommand{\kbar}{\overline{k}}
\newcommand{\fp}{\mathfrak{p}}
\newcommand{\fq}{\mathfrak{q}}
\newcommand{\sO}{\mathcal{O}}
\newcommand{\Cif}{\textbf{\textsf{if}}\ }
\newcommand{\Celse}{\textbf{\textsf{else}}\ }
\newcommand{\Cfor}{\textbf{\textsf{for}}\ }
\newcommand{\Creturn}{\textbf{\textsf{return}}\ }
\newcommand{\Cor}{\textbf{\textsf{or}}\ }
\newcommand{\Cand}{\textbf{\textsf{and}}\ }
\newcommand{\Cdef}{\textbf{\textsf{define}}\ }
\newcommand{\Ccall}{\textbf{\textsf{call}}\ }
\DeclareFontFamily{U}{wncy}{}
\DeclareFontShape{U}{wncy}{m}{n}{<->wncyr10}{}
\DeclareSymbolFont{mcy}{U}{wncy}{m}{n}
\DeclareMathSymbol{\Sha}{\mathord}{mcy}{"58}
\newtheorem{theorem}{Theorem}[section]
\newtheorem{lemma}[theorem]{Lemma}
\newtheorem{prop}[theorem]{Proposition}
\theoremstyle{definition}
\newtheorem{definition}[theorem]{Definition}
\newtheorem{question}[theorem]{Question}
\newtheorem{remark}[theorem]{Remark}
\newtheorem{example}[theorem]{Example}
\newcounter{nootje}
\renewcommand\check[1]
\begin{document}
\bibliographystyle{plain}
\title{Two-cover descent on hyperelliptic curves}
\parskip=0.1pt
\author{Nils Bruin}
\address{Department of Mathematics,
         Simon Fraser University,
         Burnaby, BC,
         Canada V5A 1S6}
\email{nbruin@sfu.ca}
\thanks{Research of the first author supported by NSERC}
 
\author{Michael Stoll}
\address{Mathematisches Institut,
         Universit\"at Bayreuth,
         95440 Bayreuth, Germany}
\email{Michael.Stoll@uni-bayreuth.de}

\subjclass{Primary 11G30; Secondary 14H40.}

\date{\today}
\keywords{Local-global obstruction, rational points, hyperelliptic curves, descent}

\begin{abstract} We describe an algorithm that determines a set of
unramified covers of a given hyperelliptic curve, with the property that any
rational point will lift to one of the covers. In particular, if the algorithm
returns an empty set, then the hyperelliptic curve has no rational points. This
provides a relatively efficiently tested criterion for solvability of
hyperelliptic curves. We also discuss applications of this algorithm to curves
of genus $1$ and to curves with rational points.
\end{abstract}

\maketitle


\section{Introduction}

In this paper we consider the problem of deciding whether an algebraic curve 
$C$
over a number field $k$ has any $k$-rational points. We assume that $C$ is
complete and non-singular.
A necessary condition for $C(k)$ to be non-empty is that $C$ has a rational 
point over every extension of $k$.
In particular, for any place $v$ of $k$, the curve $C$ should have a rational 
point
over the completion $k_v$ of $k$ at $v$.

For curves of genus $0$, this is sufficient as well: if a genus $0$ curve $C$ 
has a
rational point over every completion $k_v$ of $k$, then $C(k)$ is non-empty.
A $k_v$-point of $C$ is referred to as a \emph{local} point of $C$ at $v$
and a $k$-point of $C$ is called a \emph{global} point. For a genus $0$ curve
$C$,
having a local point everywhere (at all places of $k$) implies having a
global point. Genus $0$ curves are said to obey the
\emph{local-to-global} principle for points.

The local-to-global principle is important from a computational point of view.
One can decide in finite time whether a curve has points everywhere locally:
for any curve $C$ over a number field $k$, the set $C(k_v)$ is nonempty for 
all
places $v$ outside an explicitly determinable finite set $S$. For the 
remaining
places $v\in S$, one can decide in finite time if $C(k_v)$ is non-empty as
well. See \cite{bruin:ternary} for some algorithms, in particular a quite
efficient algorithm for hyperelliptic curves. Thus, whether a curve has points
everywhere locally can actually be decided in finite time.

It is well known that curves of genus greater than $0$ do not always obey the
local-to-global principle. Most proofs of this phenomenon are based on the 
fact that
curves of positive genus can have unramified Galois-covers. 
In fact, if a curve~$C$ of positive genus has a
rational point~$P$, then the Abel-Jacobi map allows us to consider $C$ as a
non-singular subvariety of its Jacobian variety~$\Jac(C)$. Since the map
$\pi : \Jac(C)\to\Jac(C), Q \mapsto nQ + P$ is unramified, the pull-back 
$\pi^*(C)$ 
yields an unramified cover of $C$ that has a rational point mapping to~$P$.
More generally, an {\em $n$-cover} of~$C$ is a cover that is isomorphic to
one of this form over an algebraic closure $\kbar$ of~$k$.
Thus, if one can show that an algebraic curve $C$ does
not have $n$-covers that have a rational point, 
then it follows that $C$ has no rational points. Even though $C$ may have
points everywhere locally, it is possible that on every cover, rational points
can be ruled out by local conditions.

The major content of this paper is inspired by the following well-known
observation. Let $\phi:D\to C$ be an unramified cover of a curve $C$ over
a number field $k$ that is Galois over an algebraic closure $\kbar$ 
of~$k$.
It is a standard fact, going back to Chevalley and Weil \cite{chevweil:covers} 
 that there is a finite
collection of twists $\phi_\delta:D_\delta\to C$ of the given cover $\phi$ 
such
that any rational point on $C$ has a rational pre-image on one of the covers
$D_\delta$. We call such a set of covers a \emph{covering collection}.
Furthermore, at least in principle, such a finite collection of
covers is explicitly computable given a cover $\phi:D\to C$.

Thus, one approach for testing solvability of a curve $C$ that has points 
everywhere
locally is:
\begin{enumerate}
\item Fix $n \ge 2$.
\item Construct an $n$-cover $D$ of $C$. 
If no such covers exist, then $C(k)$ is empty.
\item Determine a covering collection associated to $D\to C$.
\item Test each member of the covering collection for local solvability. If
none of the members has points everywhere locally then no curve in the 
covering collection
has any rational points, and $C$ has no rational points either.
\end{enumerate}
Each of the covers might have a local
obstruction to having rational points, while the underlying curve $C$ has 
none.
Thus the procedure sketched above can actually prove that a curve does not 
obey
the local-to-global principle for points. 
See \cite{stoll:covers} for a detailed discussion, including the 
theoretical background and some links to
the Brauer-Manin obstruction against rational points.

In the present article we discuss a relatively efficient way of carrying out 
the
procedure sketched above for hyperelliptic curves. We consider unramified
covers $D/C$ over~$k$
such that for an algebraic closure $\kbar$ of $k$ we have that
$\Aut_{\kbar}(D/C)\simeq\Jac(C)[2](\kbar)$ as $\Gal(\kbar/k)$-modules. These
are exactly the \emph{two-covers} of $C$ over $k$. We write $\Cov^{(2)}(C/k)$ 
for
the set of isomorphism classes of two-covers of $C$ over $k$.

Our claim to efficiency stems from the fact that we avoid explicitly
constructing a covering collection. In fact, the method can be described 
without
any reference to unramified covers, see Section~\ref{sec:definition}.
Instead, we determine an abstract object that
(almost) classifies the isomorphism classes of two-covers. 
See Section~\ref{sec:geom} for how to construct the covers explicitly
from the information provided by the algorithm.

We observe that for
any field extension $L/k$ there is a well-defined map $C(L)\to\Cov^{(2)}(C/L)$
by sending a point to the two-cover that has an $L$-rational point above it.
For completions $k_v/k$, we show that this map is locally constant:
If two points $P_1,P_2\in C(k_v)$ lie sufficiently close, then they lift to 
the
same two-cover. This allows us to explicitly compute the $k_v$-isomorphism
classes of two-covers that have points locally at $v$. We can then piece
together this information to obtain the global isomorphism classes of 
two-covers
that have points everywhere locally.

We define $\sel(C/k)\subset\Cov^{(2)}(C/k)$ to be the set of everywhere 
locally
solvable two-covers of $C$. The algorithm computes a related object,
that we denote by $\selfake(C/k)$, which is a quotient of
$\sel(C/k)$. It classifies everywhere locally solvable two-covers, up to an
easily understood equivalence.
Since any curve $C$ with a rational point 
admits a globally and hence everywhere locally solvable two-cover,
$\selfake(C/k)=\emptyset$ implies that $C(k)$ is empty. 

A priori, the elements of $\selfake(C/k)$ represent two-covers that have
a model of a certain form (described in Section~\ref{sec:geom}), but
we will prove that every two-cover that has points everywhere locally does
have such a model, see Theorem~\ref{thm:twocovers} below.

In Section~\ref{sec:noratpoints} we illustrate how the algorithm presented in
Sections~\ref{sec:localmu}, \ref{sec:real}, and
\ref{sec:selfake} can be used to
show that a hyperelliptic curve has no rational points. This method was also 
used in a
large scale project \cite{brusto:smallgenus2} to determine the solvability of 
all genus $2$ curves with a
model of the form
$$y^2=f_6x^6+\cdots+f_0,\mbox{\quad where }f_i\in\{-3,-2,-1,0,1,2,3\}\mbox{ 
for
}i=0,\ldots,6.$$
In Section~\ref{sec:statistics} we describe some statistics that illustrate 
how
frequently one would expect that $\selfake(C/k)=\emptyset$ 
for curves of genus $2$.

Section~\ref{sec:ratpoints} shows how information obtained on $\selfake(C/k)$
can be used for determining the rational points on curves if $C(k)$ is
non-empty. This complements Chabauty-methods as described in
\cites{bruintract, bruin:ellchab, flynnweth:biell}. In earlier articles, the
selection of the covers requiring further attention was done by ad-hoc 
methods.
Here we describe a systematic and relatively efficient approach.

In Section~\ref{sec:genus1} we describe the results of applying the method to
curves of genus $1$. We find (unsurprisingly) that we recover well-known
algorithms for performing $2$-descents and second $2$-descents on elliptic
curves \cites{cassels:lecec, mss:seconddesc}. A practical benefit of this
observation is that to our knowledge, nobody has bothered to implement second
two-descent over arbitrary number fields, whereas our implementation in MAGMA 
\cite{magma} (which is available for download at 
\cite{brusto:implementation})
can immediately be used. We give an example by exhibiting
an elliptic curve over $\QQ(\sqrt{2})$ with a non-trivial Tate-Shafarevich
group.


\section{Definition of the fake $2$-Selmer group}\label{sec:definition}

Let $k$ be a field of characteristic $0$ and let
$C$ be a non-singular projective hyperelliptic curve of genus $g$ over $k$,
given by the affine model
$$C: y^2=f_{n}x^{n}+\cdots+f_0=f(x),\mbox{\quad where $f$ is square-free}.$$
If $n$ can be chosen to be odd, then $C$ has a rational
Weierstrass point. This is a special situation and, by not placing any
Weierstrass point over $x=\infty$, we see that we lose no generality by assuming
that $n$ is even, in which case $n=2g+2$. In practice, however, computations
become considerably easier by taking $n$ odd if possible. The construction below
can be adapted to accommodate for odd $n$. See
Remark~\ref{rmk:odddegree} below.

From here on we assume that $n$ is even unless explicitly stated otherwise. We consider the algebra
$$A=k[x]/(f(x))$$
and we write $\theta$ for the image of $x$ in $A$, so that $f(\theta)=0$. We
consider the subset of $A^*$ modulo squares and scalars (elements of $k^*$)
with representatives in $A^*$ that have a norm in $k^*$ that is equal to $f_n$
modulo squares:
$$H_k=\{\delta\in A^*/A^{*2}k^*: N_{A/k}(\delta)\in f_n k^{*2}\}.$$
The set $H_k$ might be empty (but see Question~\ref{Q}). As we will see, this implies that $C$ has no
rational points. This follows from the fact that we can define a map $C(k)\to
H_k$. First, we define the partial map:
$$\begin{array}{cccc}
\mu_k:&C(k)&\to&H_k\\
&(x,y)&\mapsto&x-\theta \, .
\end{array}$$
Here and in the following, we write $x-\theta$ instead of the correct, but
pedantic, \hbox{$(x-\theta) A^{*2} k^*$;} we hope that no confusion will result.
This definition of $\mu$ does not provide a valid image for any point
$(x_1,0)\in C(k)$. For any such point, we can write $f(x)=(x-x_1)\tilde{f}(x)$
and we define:
$$\mu_k( (x_1,0))=(x_1-\theta)+\tilde{f}(\theta).$$
Furthermore, if $f_n\in k^{*2}$ then the desingularisation of $C$ has two
points, say $\infty^+,\infty^-$ above $x=\infty$. We define
$$\mu(\infty^+)=\mu(\infty^-)=f_n,$$
where $f_n\equiv 1$ modulo $k^{*2}$.

\begin{remark}\label{rmk:odddegree}
While we lose no generality by assuming that $f(x)$ is of even degree, for
computational purposes it is often preferable to work with odd degree $f(x)$ as
well. We can use the definitions above if we replace the definition of $H_k$ by
$$H_k=\{\delta\in A^*/A^{*2}: N_{A/k}(\delta)\in f_n k^{*2}\}.$$
In this case, there is a unique point $\infty$ above $x=\infty$. We define
$$\mu(\infty)=f_n$$
\end{remark}

If $K$ is a field containing $k$ (we will consider a number field $k$ with a
completion $K$), the natural map $A\to A\otimes_k K$ induces the commutative
diagram
$$\xymatrix{
C(k)\ar[r]^{\mu_k}\ar[d]&H_k\ar[d]^{\rho_K}\\
C(K)\ar[r]^{\mu_K}&H_K}$$
If $k$ is a number field and $v$ is a place of $k$, we write $\mu_{k_v}=\mu_v$,
$\mu_k=\mu$ and $\rho_{k_v}=\rho_v$. For a number field $k$, we define
$$\selfake(C/k)=\left\{\delta\in H_k: \rho_v(\delta)\in\mu_v(C(k_v))\text{ for
all places }v\text{ of }k\right\} \subset H_k.$$
It is then clear that $\mu_k(C(k)) \subset \selfake(C/k)$.
In particular, $\selfake(C/k) = \emptyset$ implies that $C$ does not have
$k$-rational points.


\section{Geometric interpretation of $H_k$}\label{sec:geom}

In this section we give a geometric and Galois-cohomological interpretation of
the set $H_k$ and the map $\mu_k$ we defined in Section~\ref{sec:definition}.
The material in this section is not essential for the other sections in this
text. 
\begin{definition} Let $C$ be a non-singular curve of genus $g$ over a field $k$. A
non-singular
absolutely irreducible cover $D$
of $C$ is called a \emph{two-cover} if $D/C$ is unramified and Galois over a separable closure
$\kbar$ of $k$ and $\Aut_{\kbar}(D/C)\simeq (\ZZ/2\ZZ)^{2g}$. We denote the set
of isomorphism classes of $2$-covers of $C$ over $k$ by
$\Cov^{(2)}(C/k)$.
\end{definition} 

This definition is motivated by the fact that if $C$ can be embedded in
$\Jac(C)$, then such a cover can be constructed by taking $D$ to be the
pull-back of $C$ along the multiplication-by-two morphism
$[2]:\Jac(C)\to\Jac(C)$. Furthermore, over $\kbar$, all two-covers of $C$ are
isomorphic, and $\Aut_{\kbar}(D/C)\simeq \Jac(C)[2](\kbar)$ as
$\Gal(\kbar/k)$-modules in a canonical way.

Let $C$ be a curve and suppose that $D_1,D_2$ are two-covers of $C$ over $k$. Let $\phi: D_1\to D_2$
be an isomorphism of $C$-covers over $\kbar$. We can associate a $\Gal(\kbar/k)$-cocycle to this via
$$\begin{array}{rccccl}
\xi:&\Gal(\kbar/k)&\to& \Aut_{\kbar}(D_2/C)&=&\Jac(C)[2](\kbar)\\
&\sigma&\mapsto&\phi^\sigma\circ\phi^{-1}
\end{array}$$
The cocycle $\xi$ is trivial in $H^1(k,\Jac(C)[2])$ precisely if $D_1$ and
$D_2$ are isomorphic as $C$-covers over $k$. Furthermore, given a cocycle $\xi$,
one can produce a twist of $D_\xi$ of a given cover $D$:

\begin{theorem}[\cite{sil:AEC}*{Chapter X, Theorem 2.2}]
\label{thrm:twist}
If $(D \to C) \in \Cov^{(2)}(C/k)$, then the above construction gives
a bijection $\Cov^{(2)}(C/k) \to H^1(k,\Jac(C)[2])$.
\end{theorem}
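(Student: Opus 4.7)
The plan is to reduce the statement to the standard classification of $k$-forms via Galois descent. Since all two-covers of $C$ are isomorphic over $\kbar$, the set $\Cov^{(2)}(C/k)$ is exactly the set of $k$-forms of a fixed two-cover $D \to C$, and such forms are classified by $H^1$ of the geometric automorphism group, which here is $\Aut_\kbar(D/C) \cong \Jac(C)[2](\kbar)$ with its canonical Galois action. The bulk of the work is unpacking the cocycle recipe in the excerpt and checking that it realises this general bijection.

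First I would check that $\xi(\sigma) = \phi^\sigma \phi^{-1}$ is a well-defined cocycle with values in $\Aut_\kbar(D_2/C)$. Membership in the automorphism group is immediate since $\phi^\sigma$ is still a $\kbar$-isomorphism of $C$-covers $D_1 \to D_2$, so $\phi^\sigma \phi^{-1}$ is a $C$-automorphism of $D_2$. The cocycle identity
$$\phi^{\sigma\tau}\phi^{-1} \;=\; (\phi^\sigma \phi^{-1}) \cdot \bigl(\phi^{\sigma\tau}(\phi^\sigma)^{-1}\bigr)$$
then holds because $\Jac(C)[2](\kbar)$ is abelian, so the factors commute and telescope. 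Replacing $\phi$ by another $\kbar$-isomorphism $\phi'$ modifies $\xi$ by the coboundary of $\phi'\phi^{-1}$, so the class $[\xi] \in H^1(k,\Jac(C)[2])$ is independent of the choice.

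For injectivity, if $[\xi] = 0$ I would write $\xi(\sigma) = \alpha^\sigma \alpha^{-1}$ with $\alpha \in \Aut_\kbar(D_2/C)$ and observe that $\alpha^{-1}\phi : D_1 \to D_2$ is then Galois-invariant, hence descends to a $k$-isomorphism of $C$-covers. For surjectivity, given a cocycle $\xi$, I would twist the Galois action on $D_\kbar$ by $\xi$ and invoke Galois descent to obtain a $k$-variety $D_\xi$ equipped with a canonical $k$-morphism to $C$; because $\xi$ takes values in $C$-automorphisms, the $C$-structure is preserved under twisting, so $D_\xi$ is again a two-cover, and by construction the cocycle associated to $\phi = \mathrm{id}_{D_\kbar}$ (viewed as an isomorphism $D \to D_\xi$ over $\kbar$) recovers $\xi$.

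The hard part is effectivity of Galois descent for the twisted action, i.e.\ knowing that the twisted descent datum actually yields a $k$-variety $D_\xi$. This is exactly what the cited Silverman theorem supplies, and it applies here verbatim because $D$ is quasi-projective (being a curve) and the twist is by a finite subgroup of the $C$-automorphism group, placing us in the standard Weil descent setting. Once this is granted, no new ideas are required: the remaining verifications are routine manipulations with the cocycle formula.
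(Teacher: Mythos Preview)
The paper does not give its own proof of this theorem: it is stated with a citation to Silverman, \emph{Arithmetic of Elliptic Curves}, Chapter~X, Theorem~2.2, and no argument is supplied beyond the preceding paragraph, which only sets up the cocycle and asserts (without proof) that its triviality characterises $k$-isomorphism and that every cocycle arises from a twist. Your proposal is a correct fleshing-out of exactly that standard Galois-descent argument---you verify the cocycle condition, independence of the choice of~$\phi$, injectivity via descending a Galois-invariant isomorphism, and surjectivity via twisting---and you correctly identify that the only nontrivial input is the effectivity of descent for quasi-projective varieties, which is precisely the content of the cited result in Silverman. So your approach \emph{is} the paper's approach, just made explicit.

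One cosmetic remark: your appeal to abelianness in checking the cocycle identity is not strictly needed if one orders the composition the other way, since ${}^\sigma\xi(\tau)\circ\xi(\sigma) = \phi^{\sigma\tau}\circ(\phi^\sigma)^{-1}\circ\phi^\sigma\circ\phi^{-1}$ telescopes directly; but of course in the abelian setting the two conventions coincide, so this is harmless.
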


We now interpret the set $H_k$ defined in Section~\ref{sec:definition} in terms of two-covers.
Using the notation from the previous section, consider $\delta\in A^*$. There
are unique quadratic forms
$$Q_{\delta,i}(\uu)\in k[u_0,\ldots,u_{n-1}]$$
such that the identity below holds in $A[u_0,\dots,u_{n-1}]$:
$$\delta(u_0+u_1\theta +\cdots +
u_{n-1}\theta^{n-1})^2=\sum_{i=0}^{n-1}Q_{\delta,i}(\uu)\theta^i.$$
We consider the projective variety over $k$ in $\PP^{n-1}$ described by
$$D_\delta:Q_{\delta,2}(\uu)=\cdots=Q_{\delta,n-1}(\uu)=0$$
This curve $D_\delta$  is a degree $2^{n-1}$ cover of $\PP^1$ via the function
$$x(\uu)=-\frac{Q_{\delta,0}(\uu)}{Q_{\delta,1}(\uu)}.$$
Furthermore, if $f_nN_{A/k}(\delta)=v^2$ for some $v\in k$,
we can define a function
$$y(\uu)=v\,
   \frac{N_{A[\uu]/k[\uu]}\left(\sum\limits_{i=0}^{n-1}u_i\theta^i\right)}{(Q_{\delta,1}(\uu))^{n/2}}$$
which gives us morphisms $\phi_{\delta,\pm}$, depending on the choice of $v$, represented by
$$\begin{array}{cccc}
\phi_{\delta,\pm}:& D_\delta&\to&C\\
&(u_0:\cdots:u_{n-1})&\mapsto&(x(\uu),\pm y(\uu))
\end{array}$$
It is proved in \cites{bruintract,brufly:towtwocovs} that the cover $D_\delta/C$ is
a two-cover. Furthermore, if $\delta_1,\delta_2$ represent distinct classes in $H_k$, the covers
$D_{\delta_1}$ and $D_{\delta_2}$ are not isomorphic over $k$. 

For a fixed $\delta$,
the two morphisms $\phi_{\delta,+}$ and $\phi_{\delta,-}$ show that $D_\delta$
is a two-cover of $C$ in two ways.
These are related via the hyperelliptic involution on $C$, denoted by
$$\begin{array}{cccc}
\iota:&C&\to& C\\
&(x,y)&\mapsto&(x,-y)\,.
\end{array}$$
With this notation, we have $\phi_{\delta,-}=\iota\circ\phi_{\delta,+}$.

The hyperelliptic involution
induces a map $\iota^*:\Cov^{(2)}(C/k)\to \Cov^{(2)}(C/k)$ via $\iota^*(\phi)=\iota\circ\phi$.
Since elements of $H_k$ define two-covers up to $\iota^*$ we have:
$$H_k\subset \Cov^{(2)}(C/k)/\langle \iota^*\rangle.$$
Whether $\iota^*$ is the identity depends on whether there exists an isomorphism $D\to D$ over $k$ such
that the diagram below commutes.
$$\xymatrix{
D\ar[dr]_{\phi}\ar@{-->}@/^2ex/^?[rr]&&D\ar[dl]^{\iota\circ\phi}\\
&C}$$
We do not need it here, but we quote a result from \cite{poosch:cyclic}
(Thm.~11.3)
that characterizes whether $\iota^*$ acts trivially on $H^1(k,\Jac(C)[2])$.
\begin{prop} Let $C$ be as defined above. Then $\iota^*$ acts trivially on
$H^1(k,\Jac(C)[2])$ if and only if
$C$ has an odd degree Galois invariant and $\iota$-symmetric divisor class. 
A curve $C: y^2=f(x)$, where $f$ is square-free of degree $n$, has such a
divisor class precisely
\begin{itemize}
\item when $n$ is odd, or
\item when $n\equiv 0\pmod{4}$ and $f$ has an odd degree factor, or
\item when $n\equiv 2\pmod{4}$ and $f$ has an odd degree factor or is the product
of two quadratically conjugate factors.
\end{itemize}
\end{prop}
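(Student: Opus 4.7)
The plan is to reformulate the equivalence in terms of the hyperelliptic divisor class $[H] \in \Pic^2(C)$ and then verify the case list concretely. Since $P + \iota(P) \sim H$ for every $P \in C$, one has $\iota_\ast = [-1]$ on $\Jac(C) = \Pic^0(C)$, so in particular $\iota_\ast$ is the identity on $\Jac(C)[2]$. Consequently, under the bijection of Theorem~\ref{thrm:twist} (with any choice of base two-cover $D_0$), the action of $\iota^\ast$ on $\Cov^{(2)}(C/k)$ commutes with twisting by $H^1(k,\Jac(C)[2])$, so it transports to translation by the class $[\iota^\ast D_0] - [D_0]$ on $H^1(k,\Jac(C)[2])$. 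Triviality of $\iota^\ast$ is thus equivalent to the existence of a two-cover fixed by $\iota^\ast$. Writing such a two-cover as $D = \pi^{-1}(C)$ for a $2$-isogeny $\pi\colon T \to \Pic^1(C)$ of $\Jac(C)$-torsors, being fixed by $\iota^\ast$ amounts to the existence of an involution $\sigma\colon T \to T$ over $k$ with $\pi \circ \sigma = \iota \circ \pi$. Over $\kbar$ such $\sigma$ always exists; the obstruction to $k$-rationality lies in $H^1(k,\Jac(C)[2])$ and agrees with the obstruction to $[H]$ admitting a Galois-invariant square root in $\Pic^1(C/\kbar)$. The translation $[D] \mapsto [D] - \tfrac{d-1}{2}[H]$ identifies Galois-invariant $\iota$-invariant classes of odd degree $d$ with such square roots, giving the first equivalence.

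For the concrete list, the existence direction follows by explicit construction. For any $k$-rational factor $g \mid f$ of odd degree $d$, the divisor $D_g := \sum_{g(\alpha)=0}(\alpha, 0)$ is $k$-rational and $\iota$-invariant, and (taking $H = \infty^+ + \infty^-$ as a representative) the identity $\div(g(x)) = 2 D_g - d H$ shows that $[D_g] - \tfrac{d-1}{2}[H] \in \Pic^1$ is a square root of $[H]$. This treats $n$ odd (using $[\infty]$ directly) and the subcases $n \equiv 0, 2 \pmod 4$ having an odd-degree factor over $k$. In the remaining case $n \equiv 2 \pmod 4$ with $f = g \bar g$ for $g \in K[x]$ conjugate over a quadratic extension $K/k$, $\deg g = n/2$ is odd; the computation $\div(y/\bar g(x)) = D_g - D_{\bar g}$ shows $[D_g] = [D_{\bar g}]$ in $\Pic$, so $[D_g]$ is Galois-invariant, $\iota$-invariant, and of odd degree $n/2$.

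The main obstacle is the converse: verifying that in the cases excluded from the list, no Galois-invariant square root of $[H]$ exists in $\Pic^1(C/\kbar)$. The natural tool is the standard description of $\Jac(C)[2](\kbar)$ as the subgroup of $\FF_2^W / \langle \sum_{w} w \rangle$ cut out by the sum-zero condition, where $W$ is the set of roots of $f$; its Galois module structure is governed by the partition of $W$ into Galois orbits. Fixing any square root of $[H]$ over $\kbar$, the obstruction to descending to a Galois-invariant one is a class in $H^1(k, \Jac(C)[2])$, whose nontriviality in the excluded cases is checked via this combinatorial description. The dichotomy $n \equiv 0$ versus $n \equiv 2 \pmod 4$ enters precisely here, since the parity of $n/2$ determines whether the $\iota$-symmetric odd-size root subsets can be arranged into a Galois-fixed element of the 2-torsion lattice.
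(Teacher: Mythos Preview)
The paper itself does not prove this proposition: it is explicitly quoted from Poonen--Schaefer \cite{poosch:cyclic}*{Thm.~11.3}, so there is no ``paper's own proof'' to compare against. Your proposal is therefore supplying content the paper deliberately omits.

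On its merits, your outline has the correct architecture and the explicit constructions in the existence direction are fine (in particular your computation $\div(y/\bar g(x)) = D_g - D_{\bar g}$ for the quadratically-conjugate case is correct, using $\deg \bar g = n/2 = g+1$). However, two steps are not actually carried out. First, the sentence ``the obstruction to $k$-rationality lies in $H^1(k,\Jac(C)[2])$ and agrees with the obstruction to $[H]$ admitting a Galois-invariant square root'' is the heart of the first equivalence, and you assert it without justification. You would need to exhibit, for a chosen $\kbar$-square-root $[E]$ of $[H]$, an explicit identification between the cocycle $\sigma \mapsto \sigma[E] - [E]$ and the translation class $[\iota^\ast D_0] - [D_0]$; this is not automatic and is where the real work in Poonen--Schaefer lies. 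Second, the converse direction of the list --- that in the excluded cases no Galois-invariant square root exists --- is only described as something one ``checks via this combinatorial description'', with no check performed. That verification requires analysing the $\Gal(\kbar/k)$-fixed points of the affine space of square roots of $[H]$ (a torsor under $\Jac(C)[2]$) in terms of the orbit structure on the roots of $f$, and showing it is empty precisely when every $k$-rational factor of $f$ has even degree and (for $n\equiv 2\pmod 4$) $f$ is not a norm from a quadratic extension. Until those two pieces are written out, the proposal is a plausible sketch rather than a proof.
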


If $k$ is a number field, we define the \emph{2-Selmer set} of $C/k$ to be the set of everywhere
locally solvable $2$-covers:
$$\sel(C/k)=\left\{ (\phi:D\to C) \in \Cov^{(2)}(C/k) :
D(k_v)\neq\emptyset \text{ for all places $v$ of $k$}.\right\}$$
We define the \emph{fake} $2$-Selmer set to be the everywhere locally
solvable $2$-covers of the form $D_\delta$:
$$\selfake(C/k):=H_k\cap\sel(C/k)/\langle \iota^*\rangle.$$
This is consistent with the definition given in Section~\ref{sec:definition}:
if $P\in C(k)$ and $\delta=\mu(P)$, then $D_\delta$ has a rational point that
maps to $P$. Therefore, if $\delta\in H_k$ restricts to an element in
$\mu_{k_v}(C(k_v))$ for all places $v$ of $k$, then $D_\delta$ has a $k_v$-rational
point for each $v$.

This argument also shows that if $H_k$ is empty then $C(k)$ is empty. 
More precisely, the set $H_k$ classifies two-covers of the form $D_\delta$,
so if it is empty, this represents an obstruction against the existence
of a two-cover of this specific form. We will now show that every two-cover
$D \to C$ such that $D$ has points everywhere locally can be realized
as a cover~$D_\delta$.

\begin{theorem} \label{thm:twocovers}
Let $\phi : D \to C$ be a two-cover such that $D$ has points everywhere
locally. Then there is $\delta \in \selfake(C/k)$ such that $\phi$
is isomorphic to $\phi_{\delta,+}$ or~$\phi_{\delta,-}$.
In particular, 
$$\selfake(C/k) = \sel(C/k)/\langle \iota^*\rangle\,.$$
\end{theorem}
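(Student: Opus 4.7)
The plan is to leverage the bijection of Theorem~\ref{thrm:twist} between $\Cov^{(2)}(C/k)$ and $H^1(k,\Jac(C)[2])$, and to show that every class $\xi \in H^1(k,\Jac(C)[2])/\langle \iota^*\rangle$ whose corresponding cover is everywhere locally solvable already lies in the image of $H_k$. To do this I would realize $H_k$ as the kernel of an obstruction map
$$\mathrm{ob}:H^1(k,\Jac(C)[2])/\langle\iota^*\rangle \to \Br(k)[2]$$
and then invoke the Hasse-Brauer-Noether local-global principle for the $2$-torsion of the Brauer group of~$k$.

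To construct $\mathrm{ob}$, I would combine the exact sequence of Galois modules
$$1 \to \Jac(C)[2] \to R_{A/k}(\mu_2)/\mu_2 \xrightarrow{N} \mu_2 \to 1,$$
where $R_{A/k}$ denotes Weil restriction of scalars, with the Kummer-type sequence $1\to \mu_2\to R_{A/k}(\mu_2)\to R_{A/k}(\mu_2)/\mu_2\to 1$. The associated long exact cohomology sequences, together with the identification $H^1(k,R_{A/k}(\mu_2)) = A^*/A^{*2}$, produce a connecting map to $\Br(k)[2]$ whose kernel (after passing to the $\langle\iota^*\rangle$-quotient) equals the image of $H_k$. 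The norm constraint $N_{A/k}(\delta)\in f_n k^{*2}$ appearing in the definition of $H_k$ is precisely what ensures $\delta$ descends from $R_{A/k}(\mu_2)/\mu_2$ to $\Jac(C)[2]$; the twist by $f_n$ comes from the identity $N_{A/k}(x-\theta) = f(x)/f_n$ and the normalization of $\mu_k$ at the points at infinity. Making the identification $H_k = \ker(\mathrm{ob})$ precise — in particular, reconciling the quotient by $k^*$ on the $A^*$-side with the quotient by $\langle\iota^*\rangle$ on the cohomology side — is the main technical obstacle and will require careful bookkeeping at the level of cocycles.

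For the final step, let $\phi:D\to C$ be an everywhere locally solvable two-cover, with corresponding class $\xi \in H^1(k,\Jac(C)[2])$. For each place $v$ of $k$, choose $Q_v\in D(k_v)$ and set $P_v := \phi(Q_v)\in C(k_v)$. By the explicit construction of the covers $D_\delta$ in Section~\ref{sec:geom}, the element $\mu_v(P_v)\in H_{k_v}$ represents (up to $\iota^*$) the class of $D$ over $k_v$, so $\res_v(\xi)$ lies in the image of $H_{k_v}$ and hence $\mathrm{ob}(\xi)|_v = 0$ for every $v$. The Hasse-Brauer-Noether theorem then forces $\mathrm{ob}(\xi) = 0$ globally, so $\xi$ lifts to some $\delta \in H_k$, and $\phi$ is $k$-isomorphic to $\phi_{\delta,+}$ or $\phi_{\delta,-}$. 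The reverse inclusion $\selfake(C/k)\subset\sel(C/k)/\langle\iota^*\rangle$ is immediate from the definitions, which yields the stated equality.
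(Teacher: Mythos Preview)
Your cohomological plan is sound and is essentially the Poonen--Schaefer approach: realize $H_k$ as the kernel of an obstruction map into $\Br(k)[2]$ via the two short exact sequences you wrote, then kill the obstruction locally using the local points on~$D$ and invoke Hasse--Brauer--Noether. The delicate bookkeeping you flag (matching the $k^*$-quotient with the $\iota^*$-quotient and tracking the $f_n$-twist) is genuine but can be carried out; the local step is justified because over each~$k_v$ the cover with a rational point above $P_v$ is unique up to $\iota^*$ and equals $D_{\mu_v(P_v)}$, so $\res_v(\xi)$ visibly lies in the image of $H_{k_v}$.

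The paper, however, takes a quite different and more geometric route. It observes that the pullbacks $\phi^*(P)$ of all Weierstrass points $P$ are linearly equivalent on~$D$, giving a $k$-rational divisor class~$W$. The hypothesis that $D$ has points everywhere locally is used exactly once, to guarantee that this $k$-rational divisor class contains a $k$-rational divisor (the Brauer obstruction to this vanishes locally, hence globally). That divisor gives a projective embedding of~$D$ in which each $\phi^*(P)$ is a hyperplane section; pulling back $x-\theta$ then takes the shape $\delta\,\ell^2/q$ with $\delta\in A^*$, a linear form~$\ell$ over~$A$, and a quadratic form~$q$ over~$k$. Taking norms shows $\delta$ represents an element of~$H_k$, and the components $\ell_0,\dots,\ell_{n-1}$ of~$\ell$ furnish an explicit isomorphism $D\to D_\delta$.

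Both arguments ultimately rest on the same local--global input for $\Br(k)$, but packaged differently: yours makes the obstruction map explicit at the level of Galois cohomology, while the paper hides it inside the statement ``every rational divisor class contains a rational divisor'' and in return obtains the element~$\delta$ and the isomorphism $D\cong D_\delta$ completely explicitly, without ever setting up the cohomological machinery.
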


\begin{proof}
We first note that all divisors $\phi^*(P)$ on~$D$, where $P$ is a
Weierstrass point on~$C$, are linearly equivalent. This is a geometric
statement, so we can assume $k$ to be algebraically closed; then 
$\phi \simeq \phi_{1,+}$, and on~$D_1$, it is easy to see that the divisors
in question all are hyperplane sections (for example, if $P = (\theta, 0)$,
then $\phi^*(P)$ is given by the vanishing of 
$u_0 + \theta u_1 + \dots + \theta^{n-1} u_{n-1}$). Let us denote the
class of all these divisors by~$W$. Since the Galois action maps
Weierstrass points to Weierstrass points, $W$ is defined over~$k$.

The assumption that $D$ has points everywhere locally implies that
every $k$-rational divisor class contains a $k$-rational divisor.
So $W$ induces a projective embedding $D \to \PP^N$ such that the
pull-backs of Weierstrass points on~$C$ are hyperplane sections.
Now consider the function $x - \theta \in A(C)$. Its  
pull-back to~$D$ has divisor $2 \phi^*((\theta,0)) - V$, where
$V$ is a $k$-rational effective divisor whose class is twice that of
a hyperplane section. In terms of the coordinates on~$\PP^N$ (projective $N$-space over the \'etale algebra $A$), this means
that we can write
\[ (x - \theta) \circ \phi = \delta \frac{\ell^2}{q} \]
with a constant $\delta \in A^*$, a linear form~$\ell$ with
coefficients in~$A$, and a quadratic form~$q$ with coefficients
in~$k$. Taking norms (recall that $f_n N_{A/k}(x - \theta) = y^2$), 
we find that
\[ (y \circ \phi)^2
     = f_n N_{A/k}(\delta) \left(\frac{N_{A/k}(\ell)}{q^{n/2}}\right)^2 ,
\]
so that $\delta$ represents an element of~$H_k$.

We can write the linear form~$\ell$ as
\[ \ell = \ell_0 + \ell_1 \theta + \dots + \ell_{n-1} \theta^{n-1} \]
where the $\ell_i$ are linear forms with coefficients in~$k$. 
We obtain a map $D \to D_\delta \subset \PP^{n-1}$, given by 
$(\ell_0 : \dots : \ell_{n-1})$, which is the desired isomorphism.
\end{proof}


\section{Computing the local image of $\mu$ at non-archimedean places}
\label{sec:localmu}

In this section, we assume that $k$ is a non-archimedian complete local field of characteristic $0$. We
write $\sO$ for the ring of integers inside $k$, $\ord$ for the discrete valuation and
$|.|$ for the absolute value on $k$. Furthermore $\pi$ will be a uniformizer. We will
assume that $\sO/(\pi)$ is finite of characteristic $p$ and that $R\subset \sO$ is a complete set of
representatives for $\sO/(\pi)$.

Let $f(x)\in\sO[x]$ be a square-free polynomial and suppose that
$f(x)=g_1(x)\cdots g_m(x)$ is a factorisation into irreducible polynomials 
with $g_i \in \sO[x]$. 
If we write $L_i=k(\theta_i)=k[x]/(g_i(x))$ then
$A\simeq L_1\oplus\cdots\oplus L_m$ and
$$A^*/A^{*2}\simeq(L_1^*/L_1^{*2})\times\cdots\times (L_m^*/L_m^{*2}).$$

The following definitions and lemma allow us to find $\mu(C(k))$ without
computation in many cases.

\begin{definition}
Let $L$ be a local field and let $\delta \in L^*$. We say the class of $\delta$
in $L^*/L^{*2}$ is \emph{unramified} if the extension $L(\sqrt{\delta})/L$ is
unramified. If the residue characteristic
$p$ is odd, this just means that $\ord_L(\delta)$ is even.
\end{definition}

\begin{definition}\label{def:unram}
Let $A$ be an \'etale algebra over a local field $k$ and suppose that
$A\simeq L_1\oplus\cdots\oplus L_m$ is a decomposition of $L$ into irreducible
algebras. Then we say that $\delta\in A^*/A^{*2}$ is \emph{unramified} if the
image of $\delta$ in each of $L_i^*/L_i^{*2}$ is unramified.

We say that an element of $A^*/A^{*2} k^*$ is \emph{unramified} if it can
be represented by an unramified $\delta \in A^*/A^{*2}$.
\end{definition}

\begin{lemma}\label{lemma:unram}
Suppose that $f\in\sO[x]$, that the residue characteristic $p$ is odd, that 
$\ord_k(\disc(f))\leq 1$ and that the
leading coefficient of $f$ is a unit in $\sO$. Then $\mu(C(k))$ consists
of unramified elements.

Furthermore, if in addition $q=\#\sO/(\pi)$ satisfies
$$\sqrt{q}+\frac{1}{\sqrt{q}} > 2(2^{2g}(g-1)+1)$$
then $\mu(C(k))$ consists exactly of the unramified elements
of $H_k \subset A^*/A^{*2} k^*$.
\end{lemma}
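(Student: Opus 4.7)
The plan is to handle the two parts separately: the containment $\mu(C(k)) \subseteq H_k^{\mathrm{unram}}$, and, assuming the additional inequality on~$q$, the reverse inclusion.

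For the first inclusion, I would analyze a point $P = (x_0,y_0) \in C(k)$ case by case. Points with $x_0 \notin \sO$ (and the infinity points $\infty^{\pm}$ when $f_n$ is a square) are handled by switching to the chart $x = 1/x'$: one computes $\mu(P) \equiv 1 - x'_0\theta \pmod{k^*}$ with $x'_0 \in \pi\sO$, so $\mu(P)$ has a representative in $1 + \pi\sO_A$ (where $\sO_A = \prod_i \sO_{L_i}$), which is a square by Hensel's lemma since the residue characteristic is odd. For affine points with $x_0 \in \sO$, the identity $f_n\,N_{A/k}(x_0-\theta) = y_0^2$ gives that $\sum_i f_i\,\ord_{L_i}(x_0-\theta_i)$ is even, where $f_i$ denotes the residue degree of $L_i/k$. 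The hypotheses $\ord_k(\disc f) \leq 1$ and $p$ odd force a very restrictive factorization pattern: all $L_i/k$ are unramified except possibly one tamely ramified quadratic extension, and the reductions $\bar g_i \in \FF_q[x]$ are pairwise coprime. Consequently at most one index $i$ can satisfy $\ord_{L_i}(x_0-\theta_i) > 0$, and for that $i$ either $L_i = k$ or $L_i/k$ is the ramified quadratic; in both cases $f_i = 1$, so the even-weighted-sum identity forces $\ord_{L_i}(x_0-\theta_i)$ itself to be even. For a Weierstrass point $(x_1,0) \in C(k)$, the $L_i$-components of $\mu(P)$ are $f'(x_1)$ when $\theta_i = x_1$ and $x_1 - \theta_j$ otherwise; the same factorization analysis shows these are all units, since if $f'(x_1)$ had positive valuation, another root of $f$ would reduce to $\bar x_1$ modulo $\pi$ and contribute at least $2$ to $\ord_k(\disc f)$.

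For the second inclusion, I would invoke the geometric interpretation of Section~\ref{sec:geom} to replace the task by producing a $k$-rational point on the twist $D_\delta$. Since $\delta$ is unramified, it has an integral unit representative in $\sO_A^*$, and the quadratic forms $Q_{\delta,i}$ then have coefficients in $\sO$ and define an integral model $\mathcal D_\delta \subset \PP^{n-1}_\sO$. By Riemann--Hurwitz applied to the unramified degree-$2^{2g}$ cover $D_\delta \to C$, the smooth curve $D_\delta$ has genus $G = 2^{2g}(g-1)+1$, and the normalisation of the special fibre $\bar{\mathcal D}_\delta$ is a smooth curve of genus at most~$G$. The Weil--Hasse bound produces at least $q + 1 - 2G\sqrt{q}$ rational points on this smooth model, strictly positive exactly when $\sqrt{q} + 1/\sqrt{q} > 2G$, matching the hypothesis. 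I would then lift a smooth $\FF_q$-point via Hensel's lemma to a $k$-rational point of $D_\delta$, and take its image under $\phi_{\delta,\pm}$ to obtain a point of $C(k)$ whose $\mu$-image is~$\delta$.

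The main obstacle will be the bad-reduction case $\ord_k(\disc f) = 1$: here $\bar C$ acquires a node and $\bar{\mathcal D}_\delta$ inherits singularities above it, so one must argue that after the Weil bound produces a point on the normalisation, there is still a smooth $\FF_q$-point on $\bar{\mathcal D}_\delta$ available for Hensel lifting. Equivalently, the finitely many singular points must be accounted for without consuming the entire Weil surplus. This will require a careful étale-local description of how the cover $D_\delta \to C$ behaves over the node, using the unramifiedness of~$\delta$ to control the singularity type.
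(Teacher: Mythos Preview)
Your argument for the first inclusion, and for the second inclusion in the good-reduction case $\ord_k(\disc f)=0$, is essentially the paper's; you phrase the root-spacing analysis componentwise in the $L_i$ while the paper passes to a splitting field, but the content is identical.

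The gap is in the bad-reduction case $\ord_k(\disc f)=1$. You correctly flag it as the crux, but the approach you sketch cannot close it. Bounding the geometric genus of the normalisation of $\bar{\mathcal D}_\delta$ by $G=2^{2g}(g-1)+1$ and invoking the Weil bound gives at least $q+1-2G\sqrt{q}$ points on the normalisation; under the hypothesis $\sqrt{q}+1/\sqrt{q}>2G$ this quantity is merely positive, with no surplus whatsoever to absorb preimages of the singular locus. So the step ``the finitely many singular points must be accounted for without consuming the entire Weil surplus'' fails automatically with this genus estimate: there is nothing left to consume.

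What actually works (and what the paper does) is to compute the special fibre explicitly. Writing the factor of $\FF_q[x]/(\bar f)$ corresponding to the double root as $\FF_q[t]/(t^2)$ and expanding the defining equations of $D_\delta$ accordingly, one finds that $\bar D_\delta$ has a unique singular point with exactly $2^{2g-1}$ branches, and that projection away from it realises the normalisation as a smooth complete intersection of $2g-1$ quadrics in $\PP^{2g}$, hence of genus $g'=2^{2g-2}(2g-3)+1$. This is much smaller than $G$; the relevant inequality becomes $q+1-2g'\sqrt{q}>2^{2g-1}$, and one checks directly that this is implied by $\sqrt{q}+1/\sqrt{q}>2G$ for all $g\ge 1$. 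Your proposed \'etale-local analysis over the node would presumably recover the branch count $2^{2g-1}$, but you also need the sharper genus $g'$, not just ``genus at most $G$'', for the numerics to go through.
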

(Compare Prop.~5.10 in~\cite{stoll:descent} for a similar statement in the
context of 2-descent on the Jacobian of~$C$.)

\begin{proof}
Let $L/k$ be a splitting field of $f$ and let $\theta_1,\ldots,\theta_n$ be the
roots of $f$. Since the leading coefficient of $f$ is a unit, we have
$\theta_i\in\sO_L$.  We extend $\ord = \ord_k$ to $L$ by writing
$\ord(y)=\frac{1}{e(L/k)}\ord_L(y)$, where $e(L/k)$ is the ramification index of
$L/k$.

First we prove that $e(L/k)\leq 2$. Note that, since $\ord(\disc(f))\leq 1$, we
have $e(k[\theta_i]/k)\leq 2$. If $e(k[\theta_i]/k)=1$ for all $i$, then
$e(L/k)=1$. Therefore, suppose that $e(k[\theta_1]/k)=2$. Write
$f(x)=(x-\theta_1)\tilde{f}(x)$. Then
$$\disc(f)=\disc(\tilde{f}) (\tilde{f}(\theta_1))^2$$
If $e(k[\theta_1]/k)=2$ then $\ord(\tilde{f}(\theta_1))> 0$, so in fact
$\ord(\tilde{f}(\theta_1))\geq \frac{1}{2}$. But then, from
$$1=\ord(\disc(f))=\ord(\disc(\tilde{f}))+2\,\ord(\tilde{f}(\theta))$$
it follows that $\ord(\disc(\tilde{f}))=0$ and hence that $L/k[\theta_i]$ is
unramified for $i \ge 2$, so $e(L/k)\leq 2$ and $\ord$ takes values in 
$\frac{1}{2}\ZZ$ on~$L^*$.

This allows us to conclude that the roots of $f(x)$ are $p$-adically widely
spaced. From
$$1\geq\ord(\disc(f))=\ord\Bigl(\prod_{i<j}
(\theta_i-\theta_j)^2\Bigr)=2\sum_{i<j}\ord(\theta_i-\theta_j)$$
it follows that $\ord(\theta_i-\theta_j)=0$ for all but at most one pair
$\{i,j\}$, say $\{1,2\}$. If $L/k$ is ramified then we have
$\ord(\theta_1-\theta_2)=\frac{1}{2}$ and $(x-\theta_1)(x-\theta_2)$ is an
irreducible quadratic factor of $f(x)$ over $k$.

We now consider a point $(x,y)\in C(k)$ with $x\in \sO$. Since $f(x)$ is a
square in $k$ and the leading coefficient is a unit, we have that
$$2\mid \sum_{i=1}^n \ord(x-\theta_i)$$
Note, however, that $\ord(\theta_i-\theta_j)\geq \min
(\ord(x-\theta_i),\ord(x-\theta_j))$. A priori, we could still have
$\ord(x-\theta_1)=\ord(x-\theta_2)=\frac{1}{2}$ (note that these orders must be
equal because $x-\theta_1$ and $x-\theta_2$ are Galois-conjugate)
and $\ord(x-\theta_i)=0$ for
$i=3,\ldots,n$, but then $f(x)$ has odd valuation and hence is not a square in
$k$. It follows that $\ord(x-\theta_i)=0$ holds
for all but at most one $i$ and therefore that all of $\ord(x-\theta_i)$ are even.
This proves that $\mu(x,y)$ is unramified for any point $(x,y)\in C(k)$ with
$x\in\sO$.

If $x\notin \sO$, then $\ord(x) < 0 \le \ord(\theta_i)$ for all~$i$, so
(since $p$ is odd) $(x-\theta_i)/x$ is a square in~$k(\theta_i)$.
So $\mu(x,y)$ is in $A^{*2} k^*$, i.e., trivial.
It follows that the image $\mu(C(k)) \subset H_k$ is unramified.

Conversely, if $\ord(\disc(f))=0$ and $\delta \in H_k$ is unramified, then $D_\delta$
as defined in Section~\ref{sec:geom} can be presented by a model with good
reduction (by taking $\delta$ to be a unit). 
Since $D_\delta$ is an unramified cover of degree $2^{2g}$ over a genus
$g$ curve $C$, we can compute using the Riemann-Hurwitz formula that
$$\genus(D_\delta) = 2^{n-3}(n-4) + 1 = 2^{2g}(g-1)+1.$$
The Weil bounds for the number of points on a nonsingular curve over a finite
field of cardinality $q$ imply that if $q$ satisfies the inequality stated in
the lemma, then the reduction of $D_\delta$ has a nonsingular point.
Hensel's lifting theorem tells us that $D_\delta(k)$ is non-empty and
therefore that $\delta\in \mu(C(k))$.

If $\ord(\disc(f))=1$ and $\delta \in H_k$ is unramified, we claim that the reduction $\overline{D}_\delta$ of $D_\delta$ is a singular curve of genus
$2^{2g-2}(2g-3)+1$, with a unique singularity at which $2^{2g-1}$ branches meet.
If the desingularization of~$\overline{D}_\delta$ has more than $2^{2g-1}$ points, then $\overline{D}_\delta$ must have a non-singular point, so via Hensel's lemma,
$D_\delta(k)$ is non-empty and $\delta\in\mu(C(k))$.

Again, from the Weil bounds it follows that this is the case if
\[ \sqrt{q} + \frac{1}{\sqrt{q}}
     > 2 \bigl(2^{2g-2}(2g-3) + 1\bigr) + \frac{2^{2g-1}}{\sqrt{q}} . \]
It is straightforward to check that for $g>0$, this is a weaker condition than the one stated in the lemma.

We now prove the claim. By taking $\delta$ to be a unit, we see that we
can construct $\overline{D}_\delta$ by applying the construction of~$D_\delta$
over the residue class field~$\FF = \sO/(\pi)$. The reduction of~$f$
has a unique double root in~$\FF$ and otherwise simple roots in an algebraic
closure of~$\FF$. We can assume the double root to be at~$x = 0$. Since the
statement is geometric, we assume that $\FF$ is algebraically closed.
Let $\theta_2, \dots, \theta_{n-1}$ be the simple roots. We obtain equations
defining $\overline{D}_\delta$ by eliminating $X$ and~$Z$ from the following
system
\begin{gather*}
  X = \delta_0 z_0^2, \qquad -Z = z_0(\delta_1 z_0 + 2 \delta_0 z_1) \\
  X - \theta_j Z = \delta_j z_j^2, \qquad j \in \{2, \dots, n-1\}.
\end{gather*}
Here the first pair of equations is obtained from
the component $\FF[x]/(x^2)$ of the algebra $\FF[x]/(f(x))$ in the following way:
if $t$ is the image of $x$ in $\FF[x]/(x^2)$, we write elements of this
algebra in the form $a_0 + a_1 t$. We get the first two equations by setting
\[ X - Z t = (\delta_0 + \delta_1 t) (z_0 + z_1 t)^2 \]
and comparing coefficients.

Substituting the expressions for $X$ and~$Z$ into the second set of equations,
we obtain
\[ z_0\bigl(\delta_0 z_0 + \theta_j(\delta_1 z_0 + 2 \delta_0 z_1)\bigr)
    = \delta_j z_j^2, \quad j \in \{2, \dots, n-1\} \,.
\]
It can be easily checked that the only singular point of this curve is where
all variables but~$z_1$ vanish. Projecting away from this point, we obtain
a smooth curve in~$\PP^{n-2}$ that is the complete intersection of $n-3 = 2g-1$
quadrics and therefore has genus~$2^{2g-2}(2g-3)+1$. Since (away from~$z_0 = 0$)
we can reconstruct~$z_1$ from the remaining coordinates, this projection
is a birational map, hence the (geometric) genus of~$\overline{D}_\delta$
is as given. The points on the smooth model that map to the singularity
on~$\overline{D}_\delta$ have $z_0 = 0$ (this is where the function $z_1/z_0$
is not defined on the smooth model), and it can be checked that there are
exactly $2^{2g-1} = 2^{n-3}$ such poins ($z_0 = 0$, and the ratios of the
squares of the other $n-2$ coordinates are fixed and nonzero). Hence the smooth
points of~$\overline{D}_\delta$ are in bijection with the remaining points
of the smooth model.

See also \cite{bruintract}*{Section~3.1} for a more in-depth discussion of this model of $D_\delta$ and \cite{brufly:towtwocovs} for a characterisation of $D_\delta$ as a maximal elementary $2$-cover of $\PP^1$, unramified outside $\{\theta_1,\ldots,\theta_n\}$.
\end{proof}

In other cases, when a prime divides the discriminant of $f$ more than once,
the residue field is too small or the leading coefficient of $f$ is not a unit
or $k$ has even residue characteristic, we have to do some computations to find the image of $\mu$. In
principle, one could construct $H_k$ as a finite set, enumerate all $D_\delta$
and test each of these for $k$-rational points. We present a more efficient
algorithm that instead enumerates points from $C(k)$ up to some sufficient
precision.

Computational models for complete local fields usually consist of computing in the finite
ring $\sO/\pi^e$ for some sufficiently large $e$, which is usually referred to as the
\emph{precision}. The following definition allows us to elegantly state
precision bounds. The variable $\epsilon$ is an indeterminate.
$$\begin{array}{cccc}
\ord:&\sO[\epsilon]&\to&\ZZ\\
&\displaystyle{\sum a_i\epsilon^i}&\mapsto&\displaystyle{\min_i \ord(a_i)}
\end{array}$$
It follows that
$$\ord\left(\sum a_ix^i\right)\geq\ord\left(\sum a_i\epsilon^i\right)
\text{ for all }x\in\sO,$$
and hence that if $f(x)\in\sO[x]$ and $v=\ord(f(x_1+\pi^e\epsilon)-f(x_1))$, then the
value of $f(x_1)$ is determined in $\sO/\pi^v$ by the value of $x_1$ in $\sO/\pi^e$.

\begin{lemma}
\label{lem:sqrdet}
Suppose $g(x)\in k[x]$ is an irreducible polynomial and that 
for some $e\in\ZZ_{\geq 0}$ and $x_0 \in \sO$, we have
$$\ord(g(x_0+\epsilon\pi^e)-g(x_0))>\ord(g(x_0))$$
Let $L=k[\theta]=k[x]/(g(x))$. Then for any $x_1\in x_0+\pi^{e+\ord(4)}\sO$ we have
$$(x_0-\theta)(x_1-\theta)\in L^{*2}$$
\end{lemma}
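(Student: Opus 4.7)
The plan is as follows. Writing
\[
(x_0 - \theta)(x_1 - \theta) = (x_0 - \theta)^2\Bigl(1 + \tfrac{x_1 - x_0}{x_0 - \theta}\Bigr),
\]
it suffices to show that $1 + u \in L^{*2}$, where $u := (x_1 - x_0)/(x_0 - \theta)$. For this I will invoke the standard Hensel consequence: in a complete non-archimedean local field~$L$ with normalized valuation~$v_L$, any $1 + u \in L^*$ with $v_L(u) \geq 2\, v_L(2) + 1$ is a square. Writing $v$ for the unique extension of $\ord = \ord_k$ to an algebraic closure of $k$ (so that $v_L = e(L/k)\, v$ on~$L$), this is equivalent to proving $v(u) > \ord(4)$. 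The hypothesis on $x_1$ gives $\ord(x_1 - x_0) \geq e + \ord(4)$, so the desired inequality reduces to the bound
\[
v(x_0 - \theta) < e.
\]

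This bound is the crux of the proof. To establish it, factor $g(x) = g_n \prod_{j=1}^n (x - \theta_j)$ over an algebraic closure of $k$ and set $y_j = x_0 - \theta_j$. Since $g$ is irreducible over $k$, the Galois group permutes the $\theta_j$ transitively, and $v$ (as the unique extension of $\ord_k$) is Galois-invariant, so the $v(y_j)$ are all equal to a common value~$m$. In particular $v(g(x_0)) = v(g_n) + nm$. Now expand
\[
g(x_0 + \epsilon \pi^e) - g(x_0) = \sum_{k=1}^{n} \frac{g^{(k)}(x_0)}{k!}\, \pi^{ke} \epsilon^k .
\]
By the definition of $\ord$ on $\sO[\epsilon]$, the hypothesis $\ord(g(x_0+\epsilon\pi^e) - g(x_0)) > \ord(g(x_0))$ forces \emph{every} coefficient on the right to have $v$-valuation strictly greater than $v(g(x_0))$. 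Applied to the top coefficient, $g^{(n)}(x_0)/n! = g_n$, this yields $v(g_n) + ne > v(g_n) + nm$, so $e > m = v(x_0 - \theta)$, as required.

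Combining the pieces,
\[
v(u) = \ord(x_1 - x_0) - v(x_0 - \theta) \geq (e + \ord(4)) - m > \ord(4),
\]
so $v_L(u) > v_L(4) = 2\, v_L(2)$, and since $v_L$ is integer-valued on $L^*$ this forces $v_L(u) \geq 2\, v_L(2) + 1$. Hensel's lemma then gives $1 + u \in L^{*2}$, completing the proof (the edge case $x_1 = x_0$ being trivial).

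The step I expect to be the main obstacle is the bound $v(x_0 - \theta) < e$, which is also the only place where the hypotheses of the lemma enter in an essential way. Irreducibility of~$g$ is used crucially here: it forces the $v(y_j)$ to coincide, and only then does inspecting the top-degree (constant, equal to $g_n$) coefficient of the Taylor expansion directly yield the bound. Without irreducibility, the hypothesis controls only symmetric functions of the $v(y_j)$ and not their maximum, and the argument would break down.
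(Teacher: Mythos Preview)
Your proof is correct and follows essentially the same route as the paper: both establish the key inequality $v(x_0-\theta) < e$ (equivalently $\ord_L\bigl(\pi^e/(x_0-\theta)\bigr)>0$) from the hypothesis, and then deduce that $(x_1-\theta)/(x_0-\theta)$ lies in $1 + 4\,\mathfrak{m}_L \subset L^{*2}$ via Hensel. The only cosmetic difference is that the paper phrases the first step through the norm identity $g(x)=g_0\,N_{L/k}(x-\theta)$, while you read it off the top Taylor coefficient together with the Galois-invariance of~$v$; these are the same computation.
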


\begin{proof}
(Compare \cite{stoll:descent}, Lemma~6.3.)
Using that $g(x)=g_0N_{L/k}(x-\theta)$, where $g_0$ is the leading coefficient of $g(x)$,
we have
$$\ord\left(N_{L/k}\left(\frac{x_0+\epsilon\pi^e-\theta}{x_0-\theta}\right)-1\right)>0.$$
Writing $\ord_L$ for the valuation on $L$, we have that $\ord_L(\pi)$ is the ramification
index of $L/k$ and that
$$\ord_L\left(\left(\frac{x_0+\epsilon\pi^e-\theta}{x_0-\theta}\right)-1\right)>0.$$
With some elementary algebra we see that if $x_1\in x_0+ \pi^{e+\ord(4)}\sO$ then
$$\frac{x_1-\theta}{x_0-\theta}\in 1+ \pi^{\ord_k(4)+1}\sO_L\subset L^{*2}$$
and hence that $(x_0-\theta)(x_1-\theta)$ is a square in $L$.
\end{proof}

This lemma forms the basis for a recursive algorithm that determines the image of $\mu$ for
points $(x_1,y_1)\in C(k)$, with $x_1\in x_0+\pi^e\sO$. A similar procedure is
described in \cite{stoll:descent}*{page~270}. There are a few differences:
\begin{itemize}
\item we fully describe the algorithm for places with even residue
characteristic as well,
\item we do not place extra assumptions on the Newton polygon of $f(x)$,
\item the polynomial $f(x)$ does not change upon recursion. The algorithm in \cite{stoll:descent}
applies variable substitutions to $f(x)$. This will usually involve a lot of
arithmetic with the polynomial coefficients of $f$ to a relatively high $\pi$-adic
precision. We therefore expect that our algorithm will run slightly faster than
\cite{stoll:descent}, especially for small residue fields.
\end{itemize}

We first give an informal outline of the algorithm. We build up the possible
$x_1\in x_0+\pi^e\sO$, one $\pi$-adic digit at the time. At each stage, we make
sure that $f(x_1)$ is indistinguishable from a square (step 4 below).
After finitely many steps Lemma~\ref{lem:sqrdet} guarantees that the digits we
have fixed for $x_1$, determine the image of $x_1-\theta\in L^{*}/L^{*2}$. We then
add that value to the set $W$, unless $x_1$ lies close to the $x$-coordinate of
a Weierstrass point.

Hence, the purpose of the routine below is not to return a useful value, but to
modify a global list $W$ such that all values of $\mu( C(k)\cap x^{-1}(x_0+\pi^e\sO))$
outside those corresponding to Weierstrass points
are appended to $W$.

When first called, $G_0$ contains the irreducible factors of $f$. This set gets
adjusted upon recursion. 
The parameter $c_0$ is an auxiliary parameter that plays a role in keeping track
of whether the conditions of Lemma~\ref{lem:sqrdet} are met when $\ord(4)\neq 0$.
Its value is irrelevant if $G_0$ contains at least two polynomials or at least
one polynomial of degree larger than $1$.
Recall that $R$ is a complete set of representatives of $\sO/(\pi)$ in $\sO$.

\noindent\Cdef \textsf{SquareClasses}($x_0$, $e$, $G_0$, $c_0$):

\begin{enumerate}
\item[1.] \Cfor$r\in R$\textbf{\textsf{:}}
\item[2.] \hspace{1em} $x_1:= x_0+ \pi^e r$
\item[3.] \hspace{1em} $v_1=\ord(f(x_1))$;
                          $E_1:=\ord(f(x_1+\epsilon\pi^{e+1})-f(x_1))$

\item[4.] \hspace{1em} \Cif $E_1\leq v_1$ \Cor
($2\mid v_1$ \Cand $f(x_1)/\pi^{v_1}\in (\sO/\pi^{E_1-v_1})^{*2}$):
\item[5.] \hspace{2em}
  $G_1:=\{g\in G_0: \ord(g(x_1+\epsilon\pi^{e+1})-g(x_1))\leq\ord(g(x_1))\}$
\item[6.] \hspace{2em}
  \Cif $G_1=\emptyset$ \Cor ($G_1=\{g\}$ \Cand $\deg(g)=1$):
\item[7.] \hspace{3em}
  \Cif $G_0\neq G_1$\textbf{\textsf{:}} $c_1:=\ord(4)$ \Celse $c_1:=c_0-1$
\item[8.] \hspace{3em}
  \Cif $c_1 = 0$:
\item[9.] \hspace{4em}
  \Cif $G_1 = \emptyset$:
  Add the class of $\mu(x_1)$ to $W$.
\item[10.] \hspace{4em}
\Creturn
\item[11.] \hspace{2em}
  \Ccall \textsf{SquareClasses}($x_1$, $e+1$, $G_1$, $c_1$)

\end{enumerate}

\noindent\textbf{Explanation:}
\begin{enumerate}
\item[ad 1.] We split up $x_0+\pi^{e}\sO$ into smaller neighbourhoods
$x_1+\pi^{e+1}\sO$
\item[ad 3.] Here $E_1$ is the \emph{precision} to which $f(x_1)$ is determined:
$E_1$ is the largest integer such that
$f(x_1+\pi^{e+1}\sO)\subset f(x_1)+\pi^{E_1}\sO$.
\item[ad 4.] We only need to consider neighbourhoods that may contain a point
$(x_1,y_1)\in C(k)$. This is only the case if $f(x_1)$ is a square up to the
precision to which it is determined. The sets $(\sO/\pi^{E_1-v_1})^{*2}$ are
only needed for $1\leq E_1-v_1\leq \ord(4)+1$ and can be precomputed.
\item[ad 5.] The correctness of this algorithm hinges on Lemma~\ref{lem:sqrdet}.
We let $G_1$ be the subset of $G_0$ for which the lemma does not apply yet.
\item[ad 6.] Note that for any path in the recursion, in finitely many steps,
the value of any $g\in G_0$ on
$x_1+\pi^{e+1} \sO$ is determined up to a sufficiently high precision to be
distinguished from $0$, or $x_1$ is a good approximation to the root of exactly
one degree $1$ element of $G_0$.
\item[ad 7.] Informally, Lemma~\ref{lem:sqrdet} states that the value of
$g(x_1)$ in $L^*/L^{*2}$ is determined if at least $\ord(4)$ $\pi$-adic digits 
of $x_1$
\emph{beyond} the ones needed to distinguish $g(x_1)$ from $0$ are known. Since
every recursion in 11 has the effect of fixing another digit, we need a device
to count $\ord(4)$ more iterations. If $G_1$ is different from $G_0$, then we
have just gained a digit that helps to establish that $g(x_1)\neq 0$ for some
$G\in G_0$, and hence we should initialize $c_1=\ord(4)$, to count the full
$\ord(4)$ digits that still need to be added to $x_1$. Otherwise, we have just
determined one more step, so we should set $c_1=c_0-1$.
\item[ad 8.] If $c_1=0$ then the conditions of Lemma~\ref{lem:sqrdet} are satisfied
for all $g\notin G_1$: If $c_1=0$ and $g_i\notin G_1$ then
$(x_1-\theta_i)(x_2-\theta_i)$ is a square in $L_i^*$ for all $x_2\in
x_1+\pi^{e+1}\sO$.

Therefore, if $G_1=\emptyset$, then all $P\in
C(k)$ with $x(P)\in x_1+\pi^{e+1}\sO$ have the same image for $x(P)-\theta_i$ in
$L_i^*/L_i^{*2}$ and therefore, $\mu(P)=x_1-\theta$ in $A^*/A^{*2}$. In
addition, we know that
$$f(x_1)=f_0\prod_i N_{L_i/k} (x_1-\theta_i)$$
is a square due to the test in step 4. This verifies that such points $P$ do
exist and thus that $x_1-\theta$ represents an element of $\mu(C(k))$.

Alternatively, suppose that $G_1$ contains one polynomial, of degree $1$.
We write $G_1=\{g_j(x)\}$ with $g_j(x)=a(x-\theta_j)$. For any point $P\in C(k)$ with
$x(P)\in x_1+\pi^{e+1}\sO$, we have that $f(x(P))$ is a square. However, since
$$f(x)=f_0 (x-\theta_j)\prod_{i\neq j} N_{L_i[x]/k[x]} (x-\theta_i)$$
and $(x(P)-\theta_i)(x_1-\theta_i)$ is a square in $L_i$ for all $i\neq j$, we
see that the square class of $x(P)-\theta_j$, if non-zero, must be constant too
and that all such points $P$ have $\mu(P)=\mu((\theta_j,0))$. Therefore, if we
take care to record the images of all degree $1$ Weierstrass points of $C$
beforehand, these points are taken care of.

\item[ad 11.] If the test in step 6 does not hold true, or if $c_1\neq 0$ then
we cannot guarantee that $\mu$ is constant for all $P\in C(k)$ with $x(P)\in
x_1+\pi^{e+1}\sO$. In this case, we call the same routine again, to refine our
search. As remarked for step 6, the condition there will be satisfied after
finitely many recursion steps and then after at most $\ord(4)$ steps, we will
have $c_1=0$ as well.

\end{enumerate}

\noindent\Cdef \textsf{LocalImage}($f$):

\begin{enumerate}
 \item[1.] Let $g_1\cdot\cdots\cdot g_m=f$ be a factorization into irreducible
 polynomials.
 \item[2.] $A:=k[\theta]=k[x]/f(x)$; $H:=A^*/A^{*2}$
 \item[3.] $W:=\left\{(x_1-\theta)+\left.\frac{f(x)}{x-x_1}\right|_{x=x_1}\text{
 in }H: x_1 \text{ a root of $f(x)$ in }k\right\}$
 \item[4.] $\mu: x\mapsto x-\theta$ in $H$
 \item[5.] $G:=\{g_1,\ldots,g_m\}$
 \item[6.] \Ccall \textsf{SquareClasses}($0$, $0$, $G$, $-1$)
 \item[7.] \Cif $n$ is even: $\tilde{f}:= f_0 x^n+ \cdots +f_n$
 \Celse
 $\tilde{f}:= f_0 x^{n+1}+ \cdots +f_nx$
 \item[8.] Let $\tilde{G}$ consist of a factorisation of $\tilde{f}$ into
 irreducibles.
 \item[9.] $\tilde{\mu}: x\mapsto x(1-x\theta)$
 \item[10.] \Cif $n$ is odd \Cor $f_n$ is a square: Add $1$ to $W$
 \item[11.] \Cif $n$ is even \Cand $f_n$ is a square: Add $x$ to $\tilde{G}$
 \item[12.] \Ccall \textsf{SquareClasses}($0$, $1$, $\tilde{G}$, $-1$) while using
 $\tilde{f}$ and $\tilde{\mu}$ instead of $f$ and $\mu$.
 \item[13.] \Creturn $W$
\end{enumerate}

\noindent\textbf{Explanation:}
\begin{enumerate}
\item[ad 2.] The algorithm we describe determines $\mu(C(k))$ for
$\mu: C(K)\to A^*/A^{*2}$ at no extra cost. If $n$ is even,
we need to take the image
under the map $A^*/A^{*2}\to A^*/A^{*2}k^*$ in order to find a proper
interpretation of the computed set.
\item[ad 3.] We initialize $W$ with the images of the degree $1$ Weierstrass points
under $\mu$. Thus, when we call \textsf{SquareClasses} and find ourselves with $c_1=0$
and $G_1$ non-empty, then the possible image under $\mu$ has already been
accounted for.
\item[ad 4.] We initialize $\mu$ with the definition that works for most points,
for use in \textsf{SquareClasses}. (See step 10 for why we are explicit about this here.)
\item[ad 6.] We now call \textsf{SquareClasses} to add to $W$ the images $\mu(P)$ of points $P\in
C(k)$ with $x(P)\in\sO$. Given that $G$ consists of the full factorization of
$f$, the value of $c_0$ passed to \textsf{SquareClasses} is irrelevant. We pass
the dummy value of $-1$.
\item[ad 7.] Note that for the remaining points, we have $1/x(P)\in\pi\sO$.
Therefore, by making a change of variables $z=1/x$ and $w=y/x^{\lceil n/2\rceil}$, we are
left with finding the images of the points on
$$w^2=f_0 z^n+\cdots+f_n \text{ if $n$ is even or }
w^2=f_0 z^{n+1}+\cdots+f_nz \text{ if $n$ is odd,}$$
with $z\in \pi\sO$ under $\mu:z\to (1/z-\theta)=z(1-z\theta)$ modulo squares, for
non-Weierstrass points, except for $\infty^+$ and $\infty^-$.
\item[ad 10.] If $n$ is odd or $f_n$ is a square, then there are points $P\in C(k)$
with $x(P)=\infty$ and hence $z(P)=0$. We know that for such points, $\mu(P)=1$,
so we add that value to $W$.
\item[ad 11.] If $n$ is even and $f_n$ is a square, then $\infty^+$ and $\infty^-$
are rational points. However, the definition of $\tilde{\mu}$ does not yield the
correct value for these points, since $z(\infty^\pm)=0$. As a workaround, add
$x$ to $\tilde{G}$, so that the recursive search does not try to evaluate step 9
of \textsf{SquareClasses} for these points. The correct value has already been added to
$W$ in step 10.
\item[ad 12.] We now call \textsf{SquareClasses} to add to $W$ the images $\mu(P)$ of points
$P\in C(k)$ with $1/x(P)\in \pi\sO$. The nature of $\tilde{G}$ ensures that the
value passed to $c_0$ is irrelevant, so we pass a dummy value of $-1$.
Together with steps 3, 6, and 10, this
guarantees that after this, $W$ equals $\mu(C(k))$.
\end{enumerate}


\section{Computing the local image of $\mu$ at real places}\label{sec:real}

If $k$ is a completion of a number field at a complex place, then $A^*=A^{*2}$
for all $A=k[x]/f(x)$ with $f(x)$ a square-free polynomial. Furthermore $C(k)$
is non-empty for all curves $C$. In this case, $\mu(C(k)) = H_k = \{1\}$, so
there is nothing to do.

Now suppose that $k=\RR$ and that
$$f(x)=(x-\theta_1)\cdots(x-\theta_r)g(x)$$
where $\theta_1>\theta_2>\cdots>\theta_r$ are the real roots of $f(x)$
and $g(x)$ is a polynomial with no roots in $\RR$.

Then $A^*/A^{*2}=(\RR^*/\RR^{*2})^r\simeq(\ZZ/2\ZZ)^2$ and
$$\begin{array}{cccc}
\mu: &C(\RR)&\to&\RR^*/\RR^{*2}\times\cdots\times\RR^*/\RR^{*2}\\
&(x,y)&\mapsto&(x-\theta_1,\cdots,x-\theta_r)
\end{array}$$
Due to the ordering on the $\theta_i$, we have that if $x-\theta_i<0$ then
$x-\theta_j<0$ for $j\leq i$. For a point $P\in C(\RR)$ we have $f(x(P))\geq 0$,
so if $f_n>0$ then
$$\mu(C(\RR))=\{(1,\ldots,1),(-1,-1,1,\ldots,1),\ldots\},$$
which is to say, all vectors consisting of an even number of $-1$ entries followed by $1$ entries.
Conversely, if $f_n<0$ then
$$\mu(C(\RR))=\{(-1,1,\ldots,1),(-1,-1,-1,1,\ldots,1),\ldots\},$$
vectors consisting of an odd number of $-1$ entries followed by $1$ entries.

Note that, if $n$ is even, we have to quotient out by the subgroup generated by
$(-1,\ldots,-1)$, consisting of the image of $\RR^*$ in $A^*/A^{*2}$.

While the computation of $\mu(C(\RR))$ is quite straightforward, the use of this
information in computing $\selfake(C/k)$ for some number field $k$ is one of the
most error-prone parts due to precision issues. See Remark~\ref{rem:precision}
for more details.


\section{Computing the fake Selmer set}\label{sec:selfake}

In this section, let $k$ be a number field. We consider the algebra
$A=k[x]/(f(x))$. Let $S$ be the finite set of places $p$ satisfying one of:
\begin{itemize}
\item $p$ is infinite
\item $p$ has even residue characteristic
\item $f$ has coefficients that are not integral at $p$
\item the leading coefficient of $f$ is not a unit at $p$
\item $\ord_p(\disc(f))> 1$
\end{itemize}
We write $H_k(S)\subset H_k$ for the elements $\delta\in H_k$
such that $\rho_p(\delta)$ is unramified according to Definition~\ref{def:unram}
for all places $p\notin S$.
The first part of Lemma \ref{lemma:unram} asserts that $\selfake(C/k)\subset
H_k(S)$. It is a standard fact from algebraic number theory that the
subgroup $A(2,S) \subset A^*/A^{*2}$ of elements that are unramified outside~$S$
is finite, so $H_k(S)$ is a finite set.
This set can be computed, see the explanation of the \textsf{FakeSelmerSet}
algorithm below.

Let $T$ be the union of $S$ with the set of primes $p$ for which
$$\sqrt{q}+\frac{1}{\sqrt{q}} \le 2(2^{2g}(g-1)+1),\text{ where }q:=\#\sO_k/p\sO_k.$$
The second part of Lemma~\ref{lemma:unram} guarantees that for any prime
$p\notin T$ we will have
$\rho_p(H_k(S)) \subset \mu_p(C(k_p))$. Hence
$$\selfake(C/k)=\{\delta\in H_k(S): \rho_p(\delta)\in \mu_p(C(k_p)) \text{ for
all }p\in T\}.$$
This gives us a way to compute the fake Selmer-set explicitly.

\noindent\Cdef \textsf{FakeSelmerSet}$(f)$:

\begin{enumerate}
\item[1.] $A:=k[x]/(f(x))$
\item[2.] Let $S$ be the set of primes of $k$ described above.
\item[3.] \Cif $2\mid \deg(f)$:
\item[4.] \hspace{1em} $G:=A(2,S)/k(2,S)$
\item[5.] \Celse:
\item[6.]  \hspace{1em} $G:=A(2,S)$
\item[7.] $W:=\{g\in G: N_{A/k}(g)\in f_nk^{*2}\}$. \Cif $W=\emptyset$: \Creturn
$\emptyset$
\item[8.] $T:=S\cup \text{``small'' primes, as in Lemma~\ref{lemma:unram}}$
\item[9.] \Cfor $p\in T$:
\item[10.] \hspace{1em} $A_p:=A\otimes k_p$; $H_p':=A_p^*/A_p^{*2}$.
\item[11.] \hspace{1em} $W_p':=\mathsf{LocalImage}(f_p)\subset H_p'$ or, if $p\mid\infty$, use
Section~\ref{sec:real} to compute $W_p'$.
\item[12.] \hspace{1em} \Cif $2\mid \deg(f)$:
\item[13.] \hspace{2em} $H_p:=H_p'/k_p^*$; 
             $W_p:= \text{image of $W_p'$ in $H_p$}$
\item[14.] \hspace{1em} \Celse:
\item[15.] \hspace{2em} $H_p:=H_p'$; $W_p := W_p'$
\item[16.] \hspace{1em} Determine $\rho_p: G\to H_p$.
\item[17.] \hspace{1em} $W:=\{w\in W:\rho(w)\in W_p\}$.
\item[18.] \Creturn W
\end{enumerate}

\noindent\textbf{Explanation:}
\begin{enumerate}
\item[ad 3.] Following Remark~\ref{rmk:odddegree}, we also account for the
situation where $f$ is of odd degree.
\item[ad 4.] 
From Lemma~\ref{lemma:unram} it follows that $\selfake(C/k)$ can be represented by values in
$$A(2,S)=\{\delta\in A^*/A^{*2}:
 \delta\text{ is unramified in }(A\otimes k_p)^*/(A\otimes k_p)^{*2}
\text{ for all }p\notin S\}.$$
Let $S'\supset S$ contain generators for the $2$-parts of the class groups of
$k$ and the simple factors of $A$. We abuse notation slightly by writing
$A_{S'}^*$ for the $S'$-unit subgroup of $A^*$.  It is easy to verify that
$$A(2,S')=A_{S'}^*/A_{S'}^{*2}\text{ and that }A(2,S)\subset A(2,S').$$
Determining $A(2,S)\subset A_{S'}^*/A_{S'}^{*2}$ is a matter of $\FF_2$-linear
algebra.

In practice, determining $A_{S'}^*$ is the bottle-neck in these
computations, because it requires finding the class groups and unit groups of
the number fields constituting $A$.

We write $G$ for the classes representable by $A(2,S)$. It is
clear that $\selfake(C/k)\subset G$ and that $G$ is an explicitly
computable finite group.

\item[ad 7.] In all cases, the norm map induces a well-defined homomorphism
$G\to k^*/k^{*2}$, because $N_{A/k}(k^*)\subset k^{*2}$ if $2\mid \deg(f)$.

Furthermore, it may happen that $W$ is empty in this step. Since 
$\mu(C(k)) \subset \selfake(C/k) \subset W$, this implies that $C(k)$ is empty.

\item[ad 8.] As remarked in Lemma~\ref{lemma:unram}, we may obtain information
at primes of good reduction, if the size of the residue field is small. The
probability that these larger primes make a difference is rather small, and in
practice they often don't.

The theoretical size of $T$ grows extremely quickly: If $C$ is of genus $2$ then
$T$ should include all primes of norm up to $1153$ and for genus $3$ all primes
up to norm $66553$.

If it is infeasible to work with the full set $T$, one can work with a smaller
set of primes. The set we compute can then be strictly larger than $\selfake(C/k)$.

\item[ad 11.] In practice, $W$ will be a rather small set and the only reason we
want to compute $W_p$ is to reduce the size of $W$ in step 17. Especially for
large residue fields, $\mathsf{LocalImage}$ can be extremely expensive. By
integrating steps 11 through 17, one can detect early if $W_p$ is big enough to
cover all of $\rho(W)$. In that case, one does not have to compute the rest of
$W_p$ and can continue with the next $p$. This makes an immense difference in
running time in practice.

\item[ad 12.] Note that the implementation of $\mathsf{LocalImage}$ only
produces a set of representatives in $A_p^*/A_p^{*2}$ for $\mu_p(C(k_p))$. We
still have to quotient out by $k_p^*$ if $2\mid\deg(f)$.

\item[ad 16.] Since $G$ is a finite group, $\rho_p$ can simply be computed by
computing the images of the generators. However, one should take care that the
generators of $G$ in $A$ are represented by $S$-units. Algorithms naturally find
these with respect to a factor basis and writing the generators in another form
may be prohibitively expensive. One should instead compute the images of the
factor basis and take the appropriate linear combinations in an abstract
representation of the multiplicative groups.
\end{enumerate}

\begin{remark}\label{rem:precision} As is noted in Section~\ref{sec:real}, the
computation of $\mu_v(C(k_v))$ is quite straightforward for real places $v$.
The difficulty is in computing the map $\rho_v: H_k(S)\to H_{k_v}$. Any first
approach would probably involve representing $H_k(S)$ using generators of the
ring of $S$-units in $A_k$. Their images in real completions can lie very close
to $0$, making it necessary to compute very high precision approximations to
their real embeddings. As is remarked ad~16 above, a better approach is to
determine the signs of a factor basis and use the fact that $H_{k_v}$ lies in a
group to compute the images of $H_k(S)$.
\end{remark}


\section{Proving non-existence of rational points}\label{sec:noratpoints}

One of the most important applications of computing $\selfake(C/k)$ is that, if
it is empty, we can conclude that $C(k)$ is empty. This may even be the case if
$C$ does have points everywhere locally, and so it allows us to detect
failures of the local-to-global 
Principle.

\begin{example} Consider the hyperelliptic curve
$$C: y^2=2x^6+x+2.$$
Then $C(\QQ)$ is empty, but $C$ has points everywhere locally.
\end{example}

\begin{proof}
It is straightforward to check that $C$ does have points everywhere locally. In
this case, $A=\QQ[x]/(2x^6+x+2)$, which is a number field. Write $\sO$ for the
ring of integers in $A$. As it turns out, we have the prime ideal factorisation
$$2\sO=\fp\fq^5.$$
We have $\disc(2x^6+x+2)=2^4\cdot 11\cdot 271169$, so we know that
$\selfake(C/\QQ)\subset H_k(S)$ with $S=\{2,\infty\}$.

The ideal class group of $\sO$ is $\ZZ/2$ and $\fp$ and $\fq$ are not
principal ideals. Hence, there is no $S$-unit $u \in A$ such that
$N_{A/\QQ}(u) \in 2\QQ^{*2}$. Thus, in step 7 of \textsf{FakeSelmerSet}, we will find that $W$ is
empty and thus that $\selfake(C/\QQ)=\emptyset$ and therefore $C(\QQ)$ is empty.
\end{proof}
In this example, $H_{\QQ}(S)$ is empty, so there are no everywhere locally
solvable two-covers of~$C$. However, $H_{\QQ}$ is non-empty (the element
$2(\theta^5 - \theta^4 + \theta^2 - \theta + 1) \in A$ has norm~$18$, hence
represents an element of~$H_{\QQ}$, for example), so $C$ does have two-covers
of the form~$D_\delta$. This raises the following question, to which we do not
yet have an answer.

\begin{question} \label{Q}
Can a hyperelliptic curve over a number field $k$ be everywhere
locally solvable and yet have $H_k$ empty?
\end{question}

Another example that is worthwhile to illustrate, is that small primes of good
reduction can still yield information in the fake Selmer group calculation.

\begin{example}\label{ex:largegood} Consider the hyperelliptic curve
$$C: y^2=-x^{6} + 2x^{5} + 3x^{4} - x^{3} + x^{2} + x - 3$$
This curve has points everywhere locally over $\QQ$, has good reduction outside
$2$ and $35783887$, and has no rational points. One can show this by proving that
$\selfake(C/\QQ)$ is empty, but one needs to consider this curve locally at
$73$. In particular, this shows that $C$ has an unramified degree 16 cover over
$\QQ_{73}$, with good reduction and no $\FF_{73}$-points in the special fiber.
\end{example}

\begin{proof}
In this case the algebra $A$ is a number field with trivial class group. We
write $\sO$ for its ring of integers. We have
$$\disc(-x^{6} + 2x^{5} + 3x^{4} - x^{3} + x^{2} + x - 3)=2^2\cdot35783887$$
so in step 2 of \textsf{FakeSelmerSet}, we find that $S=\{2,\infty\}$. Since the class group
of $A$ is represented by prime ideals above $S$, we have
$A(2,S)=\sO_S^*/\sO_S^{*2}$, so generators are represented by a system of
fundamental units together with generators of the prime ideals above $2$:
$$\begin{array}{c|c|r}
&\alpha&N_{A/\QQ}(\alpha)\\
\hline
u_0&-1&1\\
u_1&\theta^5 - 2\theta^4 - 4\theta^3 + 2\theta^2 + 3\theta - 1&1\\
u_2&\theta^5 - 3\theta^4 + \theta^2 - 2\theta + 2&-1\\
u_3&\theta^4 - \theta^3 - 1&-1\\
p_2&\theta-1&-2\\
q_2&2/(\theta-1)^2&16\\
\hline
\end{array}$$
In Step 7 we find that $W$ is represented by $\{u_2,u_3,u_1u_2,u_1u_3\}$.

For $p=\infty$, the set $W$ does not get reduced. Note that $A$ has only two
real embeddings, corresponding to $\theta\mapsto 0.85$ and $\theta\mapsto 2.94$.
Since all representatives in $W$ have norm $-1$, we see that the real embeddings
of these elements will be of opposite sign, and since we are working modulo
$\QQ^*$, we can choose which is positive.
On the other hand, since the leading coefficient of $f$ is negative, the
algorithm in Section~\ref{sec:real} predicts that $\mu(C(\RR))=\{(-1,1)\}$,
which corresponds to the description above.

For $p=2$ the set $W$ does get reduced. We find that if
$(x,y)\in C(\QQ_2)$ then $x\in 2^2+O(2^3)$. It is only for
$\delta=u_1u_3=-\theta^5-\theta^4+1$ that we have that $\delta\in
(4-\theta+O(2^3))\QQ_2^*$ modulo squares in $(A\otimes\QQ_2)^*$, so if there is a
point $P_0\in C(\QQ)$ then $\mu(P_0)=u_1u_3$.

For $p=73$, we find in step 17 of
 \textsf{FakeSelmerSet} that $u_1u_3$ does not
map into the image of $C(\QQ_{73})$ and hence that $C(\QQ)$ is empty.
\end{proof}


\section{Applications to curves with points}\label{sec:ratpoints}

Let $k$ be a number field and let $C:y^2=f(x)$ be a curve of genus at least $2$
over $k$. Even if $C(k)$ is non-empty, the set $\selfake(C/k)$ still contains
useful information. If $\rank(\Jac_C(k))<\genus(C)$ and $\Jac_C(k)$ is actually
known, then one can use explicit versions of Chabauty's method
\cites{chabauty:original,coleman:effchab,flynn:flexchab} to compute a bound on
$\#C(k)$. In fact, if one combines this with Mordell-Weil sieving
\cites{bruelk:trinom, brusto:mwsieving, pooschst:F237}, then one would expect that one should be able to arrive
at a sharp bound \cite{poonen:heuristic}.

If $\rank(\Jac_C(k))\geq\genus(C)$ then one can try to pass to covers. One
chooses an unramified Galois cover $D/C$. By the Chevalley-Weil theorem
\cite{chevweil:covers}, the rational points of $C$ are covered by the rational
points of finitely many twists $D_\delta/C$ of $D/C$. For hyperelliptic curves
$C$, a popular choice is the $2$-cover $D_\delta$ described in
Section~\ref{sec:geom} \cites{bruintract, bruin:ellchab, brufly:towtwocovs}.
The genus of $D_\delta$ is much larger than the genus of $C$. This means that it
is possible that $\rank(\Jac_{D_\delta}(k))<\genus(D_\delta)$ for all relevant
$\delta$ and thus that Chabauty's method can be applied to each $D_\delta$.

The curve $D_\delta$ is usually of too high genus to do computations with
directly. However, over $\kbar$, the curve $D_\delta$ covers many hyperelliptic
curves besides $C$. These arise from factorisations $f(x)=g(x)h(x)$, where at
least one of $g,h$ has even degree. Suppose that $g(x)$ is monic and that its
field of definition is $L/k$, i.e., $g(x),h(x)\in L[x]$. We write
$$\begin{array}{rrcl}
E_\gamma:& \gamma y_1^2&=&g(x)\\
E'_\gamma:& (1/\gamma) y_2^2&=&h(x)\\
\end{array}$$
It is straightforward to see that, for every $\delta$, there is a value of
$\gamma=\gamma(\delta)\in L^*/L^{*2}$ such that, over $L$, we have the diagram
$$\xymatrix{
&D_\delta\ar[ddl]\ar[dd]\ar[ddr]\\
\\
E_\gamma\ar[dr]_x&C\ar[d]^x&E'_\gamma\ar[dl]^x\\
&\PP^1
}$$
Note that $D_\delta(k)$ maps to $C(k)$ and from there to $\PP^1(k)$. Therefore,
$D_\delta(k)$ maps to
$$\{P\in E_\gamma(L): x(P)\in \PP^1(k)\}$$
and in order to find which of those points correspond to points in $C(k)$ we
only have to find which points in $\PP^1(k)\cap x(E_\gamma(L))$ lift to $C(k)$.
There is ample literature on how to perform this last step
\cites{bruintract, bruin:ellchab, flynnweth:biell}. However, the problem
of finding the relevant values for $\gamma$ has largely been glanced over. This
is mainly because in any particular situation, it is quite easy to write down a
finite collection of candidates for $\gamma$ and then test, for every place $p$
of $k$ and any extension $q$ of $p$ to $L$, for each value if
$x(E_\gamma(L_q))\cap \PP^1(k_p)$ is non-empty. In fact, this is quite doable
for the fibre product $E_\gamma\times_{\PP^1} E'_\gamma$ too (see
\cite{bruintract}*{Appendix~A}). 

However, the smallest set of values for $\gamma$ we can hope to arrive at
through local means, is
$$\{\gamma(\delta): \delta\in\selfake(D/k)\}.$$
Also, note that the degree of $L$ over $k$ will usually be larger than the
degree of $A$. For instance, if $C:y^2=f(x)$ where $f(x)$
is a sextic with Galois group $S_6$ over $k$, the the field $L$ over which $f$
factors as a quadratic times a quartic is of degree $15$, while $A$ is of degree
$6$.
Hence, from a computational point of view it is interesting to
avoid as much computation as possible in $L$.

The map $\delta\mapsto\gamma(\delta)$ is in fact straightforward to compute,
given representatives $\delta\in A=k[x]/(f(x))$.
Let $L[\Theta]=L[x]/(g(x))$. Then there is a natural $k$-algebra homomorphism
$j:A\to L[\Theta]$ given by $\theta\mapsto\Theta$.
Using these definitions we have
$$\gamma(\delta)=N_{L[\Theta]/L}(j(\delta)).$$
While the degree of $L[\Theta]$ is probably quite high, we only need to compute
a norm with respect to it. This is not such an expensive operation.

The following example illustrates how the computation of the fake $2$-Selmer set
fits in with the standard methods for determining the rational points on
hyperelliptic curves.
\begin{example}
Let
$$C: y^2=2x^6+x^4+3x^2-2.$$
then $C(\QQ)=\{(\pm1,\pm2)\}$.
\end{example}

\begin{proof}
First we observe that $C$ covers two elliptic curves, $v_1^2=2u_1^3+u_1^2+3u_1-2$ and
$v_2^2=-2u_2^3+3u_2^2+u_2+2$, but each of these curves has infinitely many
rational points. When we apply \textsf{FakeSelmerSet} to this curve, we find
that $\selfake(C/\QQ)$ is represented by $\{-1-\theta,1-\theta\}$, so it is
equal to $\mu(C(\QQ))$. Putting $L=\QQ(\alpha)$, where $\alpha^2+\alpha+2=0$, we
obtain the factorization:
$$C: y^2=f(x)=(2x^2-1)(x^2-\alpha)(x^2+\alpha+1)$$
We choose $g(x)=(x^2-\frac{1}{2})(x^2-\alpha)$ and $h(x)=2(x^2+\alpha+1)$.
We find that $\gamma(1-\theta)=\gamma(-1-\theta)=\frac{1}{2}(1-\alpha)$ and
write $E:y_1^2=(1-\alpha)(2x^2-1)(x^2-\alpha)$.
Any point $(x,y)\in C(\QQ)$ must correspond to a point $(x,y_1)\in E(L)$ with
$x\in \QQ$. The curve $E$ is isomorphic over $L$ to the elliptic curve
$$\tilde{E}:v_3^2=u^3+(1-\alpha)u^2+(2-9\alpha)u+(16-2\alpha),$$
which has $\tilde{E}(L)\simeq (\ZZ/2)\times \ZZ$. This makes methods as
described in \cites{bruintract,flynnweth:biell} applicable and a $p$-adic
argument at $p=5$ proves that $x(E(L))\cap\PP^1(\QQ)=\{\pm 1\}$. 
\end{proof}


\section{Applications to genus $1$ curves}\label{sec:genus1}

In this section we illustrate how the computation of $\selfake(C/k)$ yields
interesting results even when $C$ is a genus $1$ double cover of $\PP^1$. We
recover well-known algorithms for doing $2$-descents and second $2$-descents
on elliptic curves. To our knowledge, nobody took the effort yet of implementing
second descent on elliptic curves over number fields, whereas our
implementation \cite{brusto:implementation} in MAGMA for computing
$\selfake(C/k)$ does work if $k$ is a general number field. We illustrate the
use by giving an example of an elliptic curve over $\QQ(\sqrt{2})$ with
non-trivial Tate-Shafarevich group.

If we have a genus $1$ curve of the form
$$E:y^2=f(x)=x^3+a_2x^2+a_4x+a_6,$$
then $\selfake(E/k)=\sel(E/k)$ is equal to the usual
$2$-Selmer group of $E$. In this case, the algorithm presented in
Section~\ref{sec:selfake} could be improved by using the fact that the sets computed in
\textsf{LocalImage} are groups of known size. One recovers an algorithm to
compute the $2$-Selmer group of an elliptic curve very similar to
\cite{cassels:lecec}.

Following Section~\ref{sec:geom}, for every $\delta\in\sel(E/k)$, we can write down an
everywhere locally solvable cover $D_\delta$. The model we obtain is the
intersection of two quadrics in $\PP^3$. The pencil of quadrics cutting out
$D_\delta$ contains a singular quadric $Q_0$ over $k$, however. Since $D_\delta(k_p)$
is non-empty for every $p$, the Hasse principle for conics mandates that $Q_0$
contains a line over $k$ and thus that the lines on $Q_0$ form a $\PP^1$ over
$k$. By sending a point on $D_\delta$ to the line on $Q_0$ that goes through
that point, we realise $D_\delta$ as a double cover of a $\PP^1$ and we obtain a
model of the form
$$C_\delta: Y^2=F_4 X^4+F_3 X^3+\cdots+F_0$$
where we know that $C_\delta$ is everywhere locally solvable.

Note that $C_\delta$ itself is again a curve of genus $1$. We can compute
$\selfake(C_\delta/k)$ in this case as well. The data obtained is the same as
that obtained by doing a second $2$-descent, along the lines of \cite{mss:seconddesc}. In this
case too, the algorithm could be optimised a bit by observing that
$\sel(C_\delta/k)$
maps surjectively to the fiber over $\delta$ with respect to
$\mathrm{Sel}^{(4)}(E/k)\to \sel(E/k)$; 
the fibers of the map $\sel(C_\delta/k) \to \mathrm{Sel}^{(4)}(E/k)$
are isomorphic to 
$E(k)[2]/2E(k)[4]$ (see section~6.1.3 in~\cite{stamminger:thesis}).
Thus $\sel(C_\delta/k)$ is
either empty or has a known cardinality. Similarly, in \textsf{LocalImage}, the
fact that the set $W$ carries a $\mu_{k_p}(E(k_p))$-action and is of known
cardinality can speed up the computation immensely.

\begin{example} Let $\alpha=\sqrt{2}$ and consider
$$E: y^2=x^3+(2-2\alpha)x+(2-9\alpha)$$
Then $\Sha(E/\QQ(\alpha))$ is non-trivial.
\end{example}

\begin{proof}
We find that $\delta=\theta^2+(8-4\alpha)\theta+13 - 6\alpha$ is in
$\selfake(E/\QQ(\alpha))$. The corresponding cover of $E$ can be given by the
model
\[ C : Y^2 = -(2 \alpha + 3) X^4 + (4 \alpha + 6) X^3 - (18 \alpha + 24) X^2
               + (16 \alpha + 24) X + 2 \alpha + 4 \,.
\]
It turns out that $\selfake(C/k)$ is empty. Hence, it follows that
$\delta\in\sel(E/k)$ is not in the image of $\mu(E(k))$ and therefore represents
a non-trivial member of $\Sha(E/\QQ(\alpha))[2]$. In fact, we have shown that
$\delta$ represents an element of $\Sha(E/\QQ(\alpha))$ that is not divisible by
$2$.
\end{proof}


\section{Efficiency of two-cover descent}\label{sec:statistics}

It is natural to ask how often we should expect that $\selfake(C/k)$ is empty if
$C(k)$ is. This is equivalent to determining what proportion of curves
$C$ over $k$ have an everywhere locally solvable two-cover 
(see Theorem~\ref{thm:twocovers}).
To quantify this question, let us limit ourselves to curves $C$ of genus $2$ and
$k=\QQ$. We define a na{\"\i}ve concept of height on the set of genus $2$ curves
over $\QQ$ so that we can define the proportion we are interested in as a limit.

\begin{definition} Let $M(D)$ be the set of genus $2$ curves over $\QQ$
of the form
$$y^2=f(x)$$
where $f(x)=f_6 x^6+\cdots+f_0\in\ZZ[x]$ and $|f_i|\leq D$ for $i=0,\ldots,6$
\end{definition}

Note that (especially when $D$ is large) an isomorphism class of a curve may be
represented by many different models in~$M(D)$.

It would be perhaps more natural to order the curves by $|\disc(f)|$ rather than maximal
absolute value of the coefficients. Our motivation for partitioning the set of
genus $2$ curves by $M(D)$ is that one can easily sample
uniformly from $M(D)$, whereas this is much more complicated otherwise.

A related question has a definite answer. In
\cites{poosto:locdens,poosto:casselstate} it is proved that there is a
well-defined proportion of genus $2$ curves over $\QQ$ that have points
everywhere locally. In fact, if one actually computes the local densities
involved, one arrives at
$$\lim_{D\to\infty}
\frac{\#\{C\in M(D): C(\QQ_p)\neq \emptyset \text{ for all }p\}}{\#M(D)}\approx
85\%$$
On the other hand, one would expect that the proportion of curves that
actually have a rational point vanishes as $D$ grows:
$$\lim_{D\to\infty}
\frac{\#\{C\in M(D): C(\QQ)\neq\emptyset\}}{\#M(D)}\stackrel{?}{=}0\,.$$
Heuristic considerations suggest that the quantity under the limit should
be of order $D^{-1/2}$. We are interested in the question whether the
following limit exists and what might be its value:
$$\lim_{D\to\infty}
\frac{\#\{C\in M(D): \selfake(C/\QQ)\neq \emptyset\}}{\#M(D)}.$$
To this end, we tested if $\selfake(C/\QQ)=\emptyset$ for a fairly large number
of curves, sampled from $M(D)$ for various $D$. For $D=1,2,3$ we considered all
of $M(D)$ (see \cite{brusto:smallgenus2}) and our results are unconditional for
all but 42 of the roughly 200\,000 isomorphism classes of curves involved.
In the table below, these curves are counted according to isomorphism class.

For $D=4,\ldots,60$ and for $D=100$ we have sampled curves $C$ from $M(D)$ uniformly randomly
and computed the following:
\begin{itemize}
\item Whether the curve has a small rational point (whose $x$-coordinate is a
rational number with numerator and denominator bounded by $10\,000$ in absolute
value),
\item Whether the curve is everywhere locally solvable,
\item Whether $\selfake(C/\QQ)=\emptyset$, where for reasons of efficiency, the class group information
needed to compute $A(2,S)$ was only verified subject to the generalized Riemann
hypothesis.
\end{itemize}
This places each curve in one of the four categories
\begin{itemize}
\item $C(\QQ_v)=\emptyset$ for some place $v$
\item $\selfake(C/\QQ)=\emptyset$ (but $C$ does have points everywhere locally)
\item $C(\QQ)$ contains a small point
\item It is unknown whether $C$ has a rational point or not.
\end{itemize}

\begin{table}[thb]
\begin{tabular}{|r|r|r|r|r|r|} \hline
&\multicolumn{2}{c|}{$\selfake(C/\QQ)=\emptyset$}&
\multicolumn{2}{c|}{$\selfake(C/\QQ)\neq\emptyset$} & \\
\hline
$D$&local obstruction&&
&C$(\QQ)\neq\emptyset$ & total \\
\hline
1&45&3&0&401 & 449 \\
2&2823&1096&29&14116 & 18064 \\
3&29403&27786&1492&137490 & 196171 \\\hline
10 & 5903 & 9915 & 1546 &20242 & 37606 \\
20 & 2020 & 4748 & 1012 & 5393 & 13173 \\
30 & 2717 & 6675 & 1579 & 5959 & 16930 \\
40 & 4025 &10648 & 2682 & 7963 & 25318 \\
50 &18727 &51831 &13538 &34269 &118365 \\
60 & 1589 & 4571 & 1278 & 2547 &  9985 \\
100& 8106 &24063 & 7045 &10786 & 50000 \\\hline
\end{tabular}

\medskip

\caption{Statistics on genus $2$ curves}\label{tbl:data}
\end{table}

\begin{figure}[htb]
\scalebox{0.5}{\includegraphics{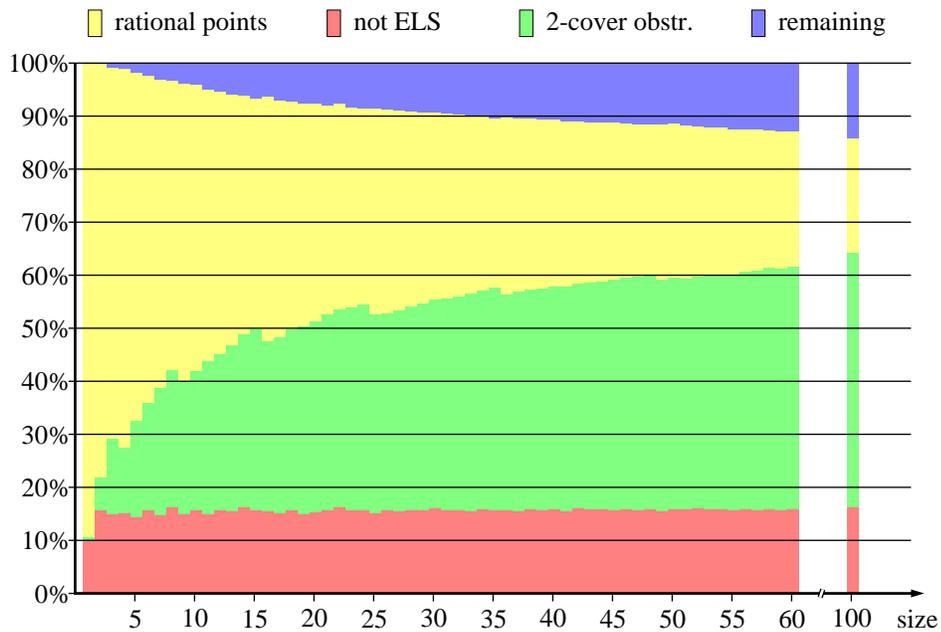}}
\caption{Proportion chart of obstructions}\label{fig:graph}
\end{figure}

See Table~\ref{tbl:data} for the statistics and Figure~\ref{fig:graph} for a graph
of the data.
It should be noted that the samples for the various~$D$
are not completely independent: Any curve
sampled from $M(D)$ that happened to have all its coefficients bounded by $D'<D$
was also included as sample from $M(D')$. 

We make a number of observations.
\begin{enumerate}
\item The proportion of curves with a local obstruction against rational
points tends to a value near $15$\% remarkably quickly.
\item As $D$ increases, the proportion of curves in $M(D)$ with a small
rational point decreases in the expected way. The jumps that can be observed
at $D = 4, 9, 16, \dots$ come from additional possibilities for points
at infinity or with $x = 0$ that occur when the leading or trailing coefficient
is a square.
\item The proportion of curves with $\selfake(C/\QQ)$ non-empty decreases 
more slowly. Figure~\ref{fig:graph} clearly shows that, at least for $C\in
M(100)$, testing if $\selfake(C/\QQ)=\emptyset$ is a very useful criterion to
decide if $C(\QQ)$ is empty, with less than 15\% of undecided curves.
\item The data is inconclusive on a possible limit value for the proportion of
curves $C\in M(D)$ with $\selfake(C/\QQ)=\emptyset$ as $D\to\infty$,
but it suggests that it might be somewhere between 65\% and~85\%.
It would be very interesting to find out if this limit exists and what
its approximate value might be. What makes this likely to be hard is the
subtle interplay between local and global information that determines
the size of~$\selfake(C/\QQ)$.
\end{enumerate}


\begin{bibdiv}
\begin{biblist}
\bib{magma}{article}{
  title={The Magma algebra system. I. The user language},
  author={Bosma, The MAGMA computer algebra system is described in Wieb},
  author={Cannon, John},
  author={Playoust, Catherine},
  journal={J. Symbolic Comput.},
  volume={24},
  number={3--4},
  pages={235--265},
  date={1997},
}

\bib{bruintract}{book}{
  author={Bruin, N. R.},
  title={Chabauty methods and covering techniques applied to generalized Fermat equations},
  series={CWI Tract},
  volume={133},
  note={Dissertation, University of Leiden, Leiden, 1999},
  publisher={Stichting Mathematisch Centrum Centrum voor Wiskunde en Informatica},
  place={Amsterdam},
  date={2002},
  pages={x+77},
  isbn={90-6196-508-X},
  review={\MR {1916903 (2003i:11042)}},
}

\bib{bruin:ellchab}{article}{
  author={Bruin, Nils},
  title={Chabauty methods using elliptic curves},
  journal={J. Reine Angew. Math.},
  volume={562},
  date={2003},
  pages={27--49},
  issn={0075-4102},
  review={\MR {2011330 (2004j:11051)}},
}

\bib{bruin:ternary}{article}{
  author={Bruin, Nils},
  title={Some ternary Diophantine equations of signature $(n,n,2)$},
  conference={ title={Discovering mathematics with Magma}, },
  book={ series={Algorithms Comput. Math.}, volume={19}, publisher={Springer}, place={Berlin}, },
  date={2006},
  pages={63--91},
  review={\MR {2278923 (2007m:11047)}},
}

\bib{bruelk:trinom}{article}{
  author={Bruin, Nils},
  author={Elkies, Noam D.},
  title={Trinomials $ax\sp 7+bx+c$ and $ax\sp 8+bx+c$ with Galois groups of order 168 and $8\cdot 168$},
  conference={ title={Algorithmic number theory}, address={Sydney}, date={2002}, },
  book={ series={Lecture Notes in Comput. Sci.}, volume={2369}, publisher={Springer}, place={Berlin}, },
  date={2002},
  pages={172--188},
  review={\MR {2041082 (2005d:11094)}},
}

\bib{brufly:towtwocovs}{article}{
  author={Bruin, Nils},
  author={Flynn, E. Victor},
  title={Towers of 2-covers of hyperelliptic curves},
  journal={Trans. Amer. Math. Soc.},
  volume={357},
  date={2005},
  number={11},
  pages={4329--4347 (electronic)},
  issn={0002-9947},
  review={\MR {2156713 (2006k:11118)}},
}

\bib{brusto:mwsieving}{unpublished}{
  author={Bruin, Nils},
  author={Stoll, Michael},
  title={The Mordell-Weil sieve: Proving non-existence of rational points on curves},
  status={in preparation},
}

\bib{brusto:smallgenus2}{article}{
  author={Bruin, Nils},
  author={Stoll, Michael},
  title={Deciding existence of rational points on curves: an experiment},
  journal={Experiment. Math.},
  volume={17},
  date={2008},
  number={2},
  pages={181--189},
  issn={1058-6458},
  review={\MR {2433884}},
}

\bib{brusto:implementation}{report}{
  author={Bruin, Nils},
  author={Stoll, Michael},
  title={Electronic resources},
  date={2008},
  eprint={http://www.cecm.sfu.ca/~nbruin/twocovdesc},
}

\bib{cassels:lecec}{book}{
  author={Cassels, J. W. S.},
  title={Lectures on elliptic curves},
  series={London Mathematical Society Student Texts},
  volume={24},
  publisher={Cambridge University Press},
  place={Cambridge},
  date={1991},
  pages={vi+137},
  isbn={0-521-41517-9},
  isbn={0-521-42530-1},
  review={\MR {1144763 (92k:11058)}},
}

\bib{chabauty:original}{article}{
  author={Chabauty, Claude},
  title={Sur les points rationnels des courbes alg\'ebriques de genre sup\'erieur \`a l'unit\'e},
  language={French},
  journal={C. R. Acad. Sci. Paris},
  volume={212},
  date={1941},
  pages={882--885},
  review={\MR {0004484 (3,14d)}},
}

\bib{chevweil:covers}{article}{
  author={Chevalley, C.},
  author={Weil, A.},
  title={Un th\'eor\`eme d'arithm\'etique sur les courbes alg\'ebriques},
  journal={C. R. Acad. Sci. Paris},
  volume={195},
  date={1932},
  pages={570--572},
}

\bib{coleman:effchab}{article}{
  author={Coleman, Robert F.},
  title={Effective Chabauty},
  journal={Duke Math. J.},
  volume={52},
  date={1985},
  number={3},
  pages={765--770},
  issn={0012-7094},
  review={\MR {808103 (87f:11043)}},
}

\bib{flynn:flexchab}{article}{
  author={Flynn, E. V.},
  title={A flexible method for applying Chabauty's theorem},
  journal={Compositio Math.},
  volume={105},
  date={1997},
  number={1},
  pages={79--94},
  issn={0010-437X},
  review={\MR {1436746 (97m:11083)}},
}

\bib{flynnweth:biell}{article}{
  author={Flynn, E. Victor},
  author={Wetherell, Joseph L.},
  title={Finding rational points on bielliptic genus 2 curves},
  journal={Manuscripta Math.},
  volume={100},
  date={1999},
  number={4},
  pages={519--533},
  issn={0025-2611},
  review={\MR {1734798 (2001g:11098)}},
}

\bib{mss:seconddesc}{article}{
  author={Merriman, J. R.},
  author={Siksek, S.},
  author={Smart, N. P.},
  title={Explicit $4$-descents on an elliptic curve},
  journal={Acta Arith.},
  volume={77},
  date={1996},
  number={4},
  pages={385--404},
  issn={0065-1036},
  review={\MR {1414518 (97j:11027)}},
}

\bib{poonen:heuristic}{article}{
  author={Poonen, Bjorn},
  title={Heuristics for the Brauer-Manin obstruction for curves},
  journal={Experiment. Math.},
  volume={15},
  date={2006},
  number={4},
  pages={415--420},
  issn={1058-6458},
  review={\MR {2293593}},
}

\bib{poosch:cyclic}{article}{
  author={Poonen, Bjorn},
  author={Schaefer, Edward F.},
  title={Explicit descent for Jacobians of cyclic covers of the projective line},
  journal={J. Reine Angew. Math.},
  volume={488},
  date={1997},
  pages={141--188},
  issn={0075-4102},
  review={\MR {1465369 (98k:11087)}},
}

\bib{pooschst:F237}{article}{
  author={Poonen, Bjorn},
  author={Schaefer, Edward F.},
  author={Stoll, Michael},
  title={Twists of $X(7)$ and primitive solutions to $x^2 + y^3 = z^7$},
  journal={Duke Math. J.},
  volume={137},
  date={2007},
  pages={103--158},
  issn={0012-7094},
  review={\MR {2309145}},
}

\bib{poosto:casselstate}{article}{
  author={Poonen, Bjorn},
  author={Stoll, Michael},
  title={The Cassels-Tate pairing on polarized abelian varieties},
  journal={Ann. of Math. (2)},
  volume={150},
  date={1999},
  number={3},
  pages={1109--1149},
  issn={0003-486X},
  review={\MR {1740984 (2000m:11048)}},
}

\bib{poosto:locdens}{article}{
  author={Poonen, Bjorn},
  author={Stoll, Michael},
  title={A local-global principle for densities},
  conference={ title={Topics in number theory}, address={University Park, PA}, date={1997}, },
  book={ series={Math. Appl.}, volume={467}, publisher={Kluwer Acad. Publ.}, place={Dordrecht}, },
  date={1999},
  pages={241--244},
  review={\MR {1691323 (2000e:11082)}},
}

\bib{sil:AEC}{book}{
  author={Silverman, Joseph H.},
  title={The arithmetic of elliptic curves},
  series={Graduate Texts in Mathematics},
  volume={106},
  publisher={Springer-Verlag},
  place={New York},
  date={1986},
  pages={xii+400},
  isbn={0-387-96203-4},
  review={\MR {817210 (87g:11070)}},
}

\bib{stamminger:thesis}{thesis}{
  author={Stamminger, Sebastian},
  title={Explicit 8-descent on elliptic curves},
  organization={International University Bremen},
  date={2005},
  note={(PhD thesis)},
  eprint={http://www.jacobs-university.de/research/dissertations/},
}

\bib{stoll:descent}{article}{
  author={Stoll, Michael},
  title={Implementing 2-descent for {J}acobians of hyperelliptic curves},
  journal={Acta Arith.},
  volume={98},
  number={3},
  date={2001},
  pages={245--277},
  review={\MR {1829626 (2002b:11089)}},
}

\bib{stoll:covers}{article}{
  author={Stoll, Michael},
  title={Finite descent obstructions and rational points on curves},
  journal={Algebra \& Number Theory},
  volume={1},
  number={4},
  date={2007},
  pages={349--391},
}

\end{biblist}
\end{bibdiv}

\end{document}